\documentclass[12pt]{amsart}
\usepackage{float}
\usepackage{amsmath}
\usepackage{amssymb}
\usepackage{esvect}
\usepackage{verbatim}
\usepackage{bm}
\usepackage{graphicx}
\usepackage{psfrag}
\usepackage{color}
\usepackage[dvipsnames]{xcolor}
\usepackage{yfonts}
\usepackage{hyperref}
\hypersetup{colorlinks=true, linkcolor=blue, citecolor=magenta, urlcolor=wine}
\usepackage{url}
\usepackage{algpseudocode}
\usepackage{fancyhdr}
\usepackage{mathtools}
\usepackage{tikz-cd}
\usepackage{stmaryrd}
\usepackage{calrsfs}
\usepackage{quiver}

\usepackage{adjustbox}

\newtheorem*{maintheorem*}{Main Theorem}
\newtheorem{theorem}{Theorem}[section]
\newtheorem*{theorem*}{Main Theorem}

\newtheorem{corollary}[theorem]{Corollary}

\newtheorem{lemma}[theorem]{Lemma}

\theoremstyle{definition}
\newtheorem{definition}[theorem]{Definition}

\numberwithin{equation}{section}

\newcommand{\fa}{\mathfrak{a}}
\newcommand{\bb}{\mathfrak{b}}
\newcommand{\dd}{\mathfrak{d}}

\newcommand{\Gal}{\text{Gal}}
\newcommand{\ra}{\rightarrow}
\newcommand{\res}{\text{res}}
\newcommand{\C}{\mathbb{C}}

\newcommand{\p}{\mathfrak{p}}
\newcommand{\QQ}{\mathfrak{Q}}
\newcommand{\q}{\mathfrak{q}}
\newcommand{\Q}{\mathbb{Q}}

\newcommand{\Z}{\mathbb{Z}}
\newcommand{\N}{\mathbb{N}}
\newcommand{\R}{\mathbb{R}}

\newcommand{\Frob}{\operatorname{Frob}}

\colorlet{darkgreen}{green!35!black}

\begin{document}
	
	\mbox{}
	\title{Connected components of the ranges of twisted divisor functions on number fields}
	\author{Sophie Zhu}
	\address{Harvard College}
	\email{sophiezhu@college.harvard.edu}
\date{}

\begin{abstract} 
    Let $r\in\mathbb{C}$, let $K$ be a finite extension of $\mathbb{Q}$, let $I_K$ be the monoid of integral ideals in the ring of integers $\mathcal{O}_K$ of $K$, and let $\chi$ be a Dirichlet character. Then define the twisted ideal divisor function $\sigma_{r, K, \chi} : I_K \rightarrow \mathbb{C}$ by $$\sigma_{r,K,\chi}(I) = \sum_{J \mid I} N(J)^{-r}\chi(N(J)),$$ where $N$ denotes the ideal norm. For real $r>1,$ we study the number of connected components $C_{r, K, \chi}$ of the closure $\overline{\sigma_{r,K,\chi}(I_K)}$, writing $C_{r,K}$ when $\chi$ is the principal character modulo 1. We prove that $C_{r,K,\chi}$ is finite when $\chi$ is real-valued. When $K = \mathbb{Q}$, we show that for fixed $r > 1,$ every sufficiently large positive integer is realized as $C_{r,\Q,\chi},$ and if $r$ is sufficiently large, then every positive integer is realized as $\chi$ varies. For finite Galois extensions $K$ over $\mathbb{Q}$, we exhibit new exponential lower bounds for $C_{r,K},$ and we prove that for every fixed integer $s \geq 2$, the values $C_{r,K}$ are unbounded as $K$ ranges over degree-$s$ extensions of $\mathbb{Q}$. 
\end{abstract}

\maketitle

\section{Introduction}

The objective of this paper is to understand the closure of the image of divisor functions. For a complex number $r$, the \textit{divisor function} $\sigma_r : \N \rightarrow \C$ is defined by $$\sigma_r(n) = \sum_{d \mid n} d^{-r},$$ where $\N = \Z_{> 0}.$ 
Since the mid-1980s, various authors have investigated the image $\sigma_r(\N).$ 
In 1986, Laatsch showed that $\sigma_1(\N)$ is dense in $[1, \infty)$ \cite{L86}. In 2000, Weiner asked what can be said about $\overline{\sigma_r(\N)},$ which is the closure of the image of $\sigma_r$ \cite{W00}. To this, in 2015 and 2016, Defant determined various fundamental topological properties of $\overline{\sigma_r(\N)}.$ In particular, he showed that $\overline{\sigma_r(\N)}$ is connected for $r \in \R$ if and only if $r \in (0, \eta],$ where $\eta$ is the unique real number satisfying $$\frac{2^\eta}{2^\eta - 1} \frac{3^\eta + 1}{3^\eta - 1} = \zeta(\eta)$$ and $\zeta$ is the Riemann zeta function \cite{D15}.
In 2017, Sanna proved that when $r > 1,$ the number $C_r$ is finite, giving an algorithm to compute $\overline{\sigma_r(\N)}$ \cite{S17}, and Defant showed that $C_r$ is finite when $\Re(r) > 1$ \cite{D17}.
In 2018, Zubrilina gave explicit lower and upper bounds on $C_r$ \cite{Z18}. In particular, she gave a lower bound of order $\pi(r),$ where $\pi(r)$ is the number of primes less than or equal to $r$, and an upper bound that is exponential in $r$. Additionally, she showed that $C_r$ does not assume all positive integral values.

In this paper, we introduce a generalized version of the divisor function, defined as follows, and study its properties.

\begin{definition}
    Let $r \in \C,$ and let $K$ be a finite extension of $\Q$. Let $R := \mathcal{O}_K$ be the ring of integers of $K$, and let $I_K$ be the set of integral ideals in $\mathcal{O}_K$. Let $\chi$ be a Dirichlet character.
    Then define the \textit{ideal divisor function on $K$ twisted by $\chi$} as the function $\sigma_{r,K,\chi} : I_K \rightarrow \C$ by $$\sigma_{r,K,\chi}(I) = \sum_{J \mid I} N(J)^{-r}\chi(N(J)),$$ where $N$ denotes the ideal norm. 
Define $C_{r,K,\chi}$ to be the number of connected components of $\overline{\sigma_{r,K,\chi}(I_K)}.$
\end{definition}

We now establish the notation we use throughout the paper. We fix a real number $r > 1$ and a finite  extension $K$ of $\Q.$ For $m \in \N$ and $x \in \R_{>0}$, let $\pi(x)$ denote the number of primes less than or equal to $n$ and $\pi_m(x)$ denote the number of primes less than or equal to $n$ that do not divide $m$. Denote the trivial character by $\chi_0$; note that the function $\sigma_{r, \Q, \chi_0}$ is precisely $\sigma_r.$ In Sections \ref{section-lower-2} and \ref{section-last}, we use $C_{r,K}$ to denote $C_{r,K,\chi_0}.$ In addition, we say a character $\chi$ is \textit{real} if it assumes values in $\{-1,0,1\}.$

This paper is structured as follows. 
In Section \ref{section-finiteness}, we prove that $C_{r,K,\chi}$ is finite, extending Sanna's proof that $C_{r}$ is finite in \cite[Theorem 1.1]{S17}.

\begin{theorem} \label{finiteness}
    Let $r > 1$ be a real number. let $K$ be a finite extension of $\Q,$ and let $\chi$ be a real character. Then $C_{r,K,\chi}$ is finite. 
\end{theorem}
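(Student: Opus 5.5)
We pass to logarithms and imitate Sanna's argument for $C_r$ in \cite[Theorem 1.1]{S17}. Since $\chi$ is real and $r>1$, the function $\sigma_{r,K,\chi}$ is multiplicative on $I_K$ and takes positive real values. For a prime ideal $P$ with $N(P)=q$ and $t=\chi(q)\in\{-1,0,1\}$, the local values are $\sigma(P^e)=\sum_{k=0}^e t^k q^{-kr}$. Let $A_P=\overline{\{\log\sigma(P^e):e\ge0\}}$. Each $A_P$ is a compact countable set containing $0$, with a single accumulation point at $e=\infty$.
\begin{itemize}
\item If $t=0$, then $A_P=\{0\}$.
\item If $t=1$, then $A_P\subset[0,-\log(1-q^{-r})]$.
\item If $t=-1$, the values alternate around $1/(1+q^{-r})$ and $A_P\subset[\log(1-q^{-r}),0]$.
\end{itemize}
In the last two cases $\operatorname{diam}A_P\asymp q^{-r}$, and every gap in $A_P$ has length $O(q^{-r})$.

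First I will check, by a routine diagonal/compactness argument, that $\log\overline{\sigma_{r,K,\chi}(I_K)}$ equals the set of all convergent sums $\sum_P a_P$ with $a_P\in A_P$. Convergence is automatic because $\sum_P N(P)^{-r}<\infty$.

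Next, order the prime ideals with $\chi(N(P))\neq0$ by norm. The key lemma is Kakeya-type, as in Sanna: if compact sets $A_i\ni0$ satisfy $\sum_i\operatorname{diam}A_i<\infty$ and every gap of $A_i$ is at most $\sum_{j>i}\operatorname{diam}A_j$ for all $i\ge i_0$, then $\sum_{i\ge i_0}A_i$ is a closed interval. To verify the hypothesis, I would use Chebotarev's density theorem in the compositum of $K$ with the cyclotomic field of the modulus of $\chi$. This shows that degree-one primes $P$ with $\chi(N(P))=1$, or at least those with $\chi(N(P))\ne0$, have positive density among rational primes $p$. Consequently
\[
\sum_{j>i}\operatorname{diam}A_j\gg\sum_{p>q_i,\ p\text{ in that class}}p^{-r}\gg \frac{q_i^{1-r}}{\log q_i},
\]
which eventually dominates the gap size $O(q_i^{-r})$. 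Fixing $y$ large, the tail $T=\sum_{N(P)>y}A_P$ is therefore a closed interval $[a,b]$. It is nondegenerate, since infinitely many $A_P$ have positive diameter.

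Finally, $S=\sum_{N(P)\le y}A_P$ is a finite sum of compact sets and hence compact, and the whole image is $S+[a,b]$. Cover $S$ by finitely many balls of radius $(b-a)/2$. All translates $s+[a,b]$ with $s$ in one ball intersect one another, so their union is connected. Hence $S+[a,b]$ is a union of finitely many connected sets and has finitely many components. Exponentiating gives the finiteness of $C_{r,K,\chi}$.

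The main obstacle is the tail step. It requires a uniform gap estimate for the alternating case $t=-1$, where the largest gap is about $q^{-r}$ near the ends. It also requires the density input to make the Kakeya condition hold for every index beyond $i_0$, and not only on average. If needed, I would restrict the tail sets to degree-one primes in a fixed Chebotarev class and absorb all other primes into $S$; an infinite sum of compact sets each containing $0$ with summable diameters is still compact.
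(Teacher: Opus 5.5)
Your proof is correct. Its skeleton matches the paper's adaptation of Sanna: a tail of primes whose contribution fills a closed interval, plus finitely many head primes. Both main steps, however, are carried out differently.

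For the tail, the paper works multiplicatively. Definition~\ref{defn-j} builds your gap condition into the threshold $j_{K,\chi}(r)$, and Lemma~\ref{finiteness-first-interval} runs an explicit greedy algorithm, using only exponents $0$ and $1$ at primes with $\chi(N(\mathfrak p))=-1$. The threshold is shown to exist via Lemma~\ref{adj-ideal} (consecutive norms of primes with $\chi=1$ have ratio tending to $1$) and the eventual monotonicity of $y_j$ in Lemma~\ref{finiteness-finitely-many-j}.

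You instead prove the greedy step once, as an abstract sumset lemma in logarithmic coordinates. You verify its hypothesis by comparing a single gap of $A_P$, which is at most $\operatorname{diam}A_P=-\log(1-q^{-r})\le 2q^{-r}$, with the whole tail, which is $\gg q^{1-r}/\log q$. Your density input gives that lower bound pointwise for every large $q$, not just on average. So the ``main obstacle'' you flag is not actually an obstacle, and the fallback is unnecessary.

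For the head, the paper runs a downward induction with Lemmas~\ref{finiteness-induct} and~\ref{finiteness-last}, checking at each prime that $\bigcup_a \sigma_{r,K,\chi}(\mathfrak p_j^a)\cdot[\alpha,\beta]$ is a finite union of intervals. You dispose of all head primes at once by covering the compact set $S$ with finitely many balls of radius $(b-a)/2$.

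Your route is shorter and more robust. It needs only compact local sets containing $0$, summable diameters, and eventually controlled gaps, and as a by-product bounds the number of components by roughly $\operatorname{diam}(S)/(b-a)+1$. The paper's route is more explicit. It gives the endpoints of the tail interval and the exact number $a_0+1$ of intervals produced by a prime with $\chi=1$. That exact count is what Lemma~\ref{rangeClemma} later uses to compute $C_{r,\Q,\chi_{m_i}}$, and your covering argument cannot supply it.
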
 

In Section \ref{section-range}, we prove that when $r$ is fixed and as $\chi$ varies, $C_{r,\Q,\chi}$ assumes all sufficiently large integer values; that is, 

\begin{theorem} \label{rangeC1}
    Let $r > 1$ be a real number. There exists $N \in \N$ such that $ \Z_{\geq N} \subseteq \{C_{r,\Q,\chi} : \chi \text{ is a character}\}.$
\end{theorem}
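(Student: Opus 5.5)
The plan is to use only principal characters of varying modulus. If $\chi$ is the principal character modulo $m$, then $\sigma_{r,\Q,\chi}$ is multiplicative. Every prime $p\mid m$ contributes the factor $1$. Every prime $p\nmid m$ contributes the local factor $f_p(a)=\sum_{j=0}^{a}p^{-jr}$, where $p^a$ is the exact power of $p$ dividing the argument. So for any finite set $E$ of primes (take $m=\prod_{p\in E}p$), the set $\overline{\sigma_{r,\Q,\chi}(\N)}$ equals
\[
X_S=\overline{\Bigl\{\prod_{p\in S}f_p(a_p) : a_p\ge 0,\ \text{almost all } a_p=0\Bigr\}},
\]
where $S$ is the set of primes outside $E$. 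It therefore suffices to show that the number $C(S)$ of connected components of $X_S$ takes every sufficiently large integer value as $S$ ranges over cofinite sets of primes. I would first record, following the argument behind Theorem~\ref{finiteness} (Sanna's method), a structural description of $X_S$. For $q$ large, the tail $T_q=\prod_{p\in S,\,p>q}$ contributes a closed set whose image is the full interval $[1,\prod_{p>q}(1-p^{-r})^{-1}]$. This holds because $q^{-r}$ is much smaller than $\sum_{p>q}p^{-r}\asymp q^{1-r}/\log q$, so the standard "each factor is smaller than the remaining product" criterion makes the tail an interval. Consequently $X_S$ is a finite union of intervals $\prod_{p\in S,\,p\le q}\{f_p(a)\}\cdot[1,L_q]$, where $L_q=\prod_{p\in S,\,p>q}(1-p^{-r})^{-1}$, and $C(S)$ is computable from finitely many primes.

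The second step is to produce cofinite $S$ with $C(S)$ arbitrarily large and with fine control. I would take a long block of consecutive large primes $q_1<\dots<q_k$ in $S$, preceded by a gap: all primes in $[Q, q_1)$ are removed, and all primes below $Q$ are removed as well. I would then separate each $q_i$ from the block above it by deleting a suitable stretch of primes just beyond $q_i$. By choosing the deleted stretches so that $q_i^{-r}$ exceeds the product of everything remaining above, each retained prime doubles (roughly) the number of components, whereas the untouched tail contributes a single interval. The components then correspond to subsets and exponent patterns of the isolated primes. This already gives exponential growth in $k$, but exponential growth alone does not cover every integer.

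The key step, and the main obstacle, is the interpolation: showing that consecutive integer values are all attained. My plan is a monotone one-prime-at-a-time deformation. Starting from a configuration with $C=N_0$ components, I would delete primes from the tail one by one, in decreasing order of size just above the threshold prime. Each such deletion shortens $L_q$ slightly, and this can split at most one pair of adjacent intervals, namely the pair whose gap is the smallest among the pairs that are currently merged. To make "at most one" rigorous, I would arrange that the gaps between the finitely many potential components are pairwise distinct and well separated relative to the tiny increments $p^{-r}$ for large $p$. This distinctness comes from a generic choice of the isolated primes. With distinct gaps and arbitrarily small increments, $C(S)$ increases by steps of exactly $0$ or $1$ as primes are deleted. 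It runs from $1$, when the tail interval swallows all gaps, up to the exponential maximum, so it passes through every integer in that range. Taking $k\to\infty$ covers all integers $\ge N$ for some $N$; the finitely many small values that fail are exactly the reason the statement only claims $\Z_{\ge N}$. The delicate point I expect to have to check is the genericity of the gaps, namely that no two gap lengths coincide. I would try to establish it using the multiplicative independence of distinct primes, which makes the relevant products of the $f_p(a)$ distinct.
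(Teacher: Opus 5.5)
There is a genuine gap in your interpolation step, and that step carries the whole content of the theorem. Take the tail to be $[1,L]$. Two adjacent points $x<y$ of the non-tail set give separate copies $x[1,L]$ and $y[1,L]$ exactly when $L<y/x$. So a deletion that moves $L_{\mathrm{old}}$ to $L_{\mathrm{new}}$ raises $C$ by the number of adjacent pairs with $y/x\in(L_{\mathrm{new}},L_{\mathrm{old}}]$.

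You therefore need the adjacent \emph{ratios} to be distinct. Multiplicative independence of primes only gives that the \emph{products} $\prod f_p(a_p)$ are distinct. With several retained primes below the tail, the ratios genuinely coincide. For example, suppose $q<q'$ are retained and $(1+q^{-r})(1+q'^{-r})<1+q^{-r}+q^{-2r}$. Then $(1,\,1+q'^{-r})$ and $(1+q^{-r},\,(1+q^{-r})(1+q'^{-r}))$ are both adjacent pairs with ratio $1+q'^{-r}$. When $L$ passes below $1+q'^{-r}$, at least two gaps open at once and $C$ jumps by at least $2$. Your doubling configurations, where each $q_i$ dominates everything above it, are built from exactly such repeated blocks.

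Even with distinct thresholds, ``arbitrarily small increments'' is not enough. As $L\to1$, the thresholds being crossed also accumulate at $1$: for instance $f_q(a+1)/f_q(a)-1\approx q^{-r(a+1)}$. So you need a quantitative comparison, at every scale, between the decrement of $L$ from one deletion and the spacing of consecutive thresholds. Separately, each deletion must remove the smallest remaining tail prime, so that the tail stays of the form $\{p>P\}$ and remains an interval.

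The paper gets exactly this comparison by keeping a single active prime $p_\ell\ge 3$ and no other retained primes below the tail. With $m_i=\frac{1}{p_\ell}\prod_{p\le p_i}p$, one has $\overline{\sigma_{r,\Q,\chi_{m_i}}(\N)}=\bigcup_{a}\sigma_{r,\Q,\chi_{m_i}}(p_\ell^a)\cdot[1,\zeta_{\Q,i}(r)]$. The thresholds are $R_a=\frac{1-p_\ell^{-r(a+2)}}{1-p_\ell^{-r(a+1)}}$, which are automatically distinct and satisfy $(R_{a+1}-1)/(R_a-1)<p_\ell^{-r}\le 3^{-r}$. Bertrand's postulate gives $(\zeta_{\Q,i+1}(r)-1)/(\zeta_{\Q,i}(r)-1)>3^{-r}$. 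Hence killing the next prime $p_{i+1}$ crosses at most one threshold, while $\zeta_{\Q,i}(r)\to 1$ forces $C\to\infty$.

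So one prime already gives unboundedness. Your exponential multi-prime construction is unnecessary, and it is precisely what destroys the ``at most one split per step'' property.
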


In particular, we show that when $r$ is large enough, $C_{r,\Q,\chi}$ assumes all positive integral values. 

\begin{theorem} \label{rangeC2}
    There exists a positive real number $r_0$ such that for any $r > r_0,$ we have $\{C_{r,\Q,\chi} : \chi \text{ is a character}\} = \N.$
\end{theorem}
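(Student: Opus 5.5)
The plan is to build, for each target value $n$, an explicit character whose ``active'' primes are few and widely separated, and to count the components of $\overline{\sigma_{r,\Q,\chi}(\N)}$ directly when $r$ is large. Theorem~\ref{rangeC1} does not suffice on its own, since its threshold $N$ depends on $r$. Every value factors as
\[
\sigma_{r,\Q,\chi}(k)=\prod_{p^a\,\|\,k}\ \sum_{j=0}^{a}\chi(p)^j p^{-jr}.
\]
Primes dividing the modulus $m$ contribute the factor $1$, so $\chi$ can switch primes off. The set $\overline{\sigma_{r,\Q,\chi}(\N)}$ is therefore the closure of the set of finite products of factor values $F_p(a)=\sum_{j\le a}\chi(p)^jp^{-jr}$, taken over the primes $p\nmid m$. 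Each $F_p$ has image accumulating only at $(1-\chi(p)p^{-r})^{-1}$. I would restrict to real characters, as in Theorem~\ref{finiteness}. Then $F_p(a)$ is increasing in $a$ when $\chi(p)=1$, and oscillates around $1/(1+p^{-r})$ when $\chi(p)=-1$.

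The key steps are as follows.

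First, a large-$r$ separation lemma. Let $q_1<q_2<\cdots$ be the primes not dividing $m$. I would show that for $r>r_0$, uniformly over the choices of $m$ used below, the contribution of all primes beyond the first $t$ active ones lies within $O(q_{t+1}^{-r})$ of $1$. This perturbation is much smaller than $q_t^{-r}$ and than the differences between distinct products built from the factors of $q_1,\dots,q_t$.

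Second, by induction on $t$, I would show that the closure splits into clusters indexed by the truncation patterns (exponents capped at some level) on $q_1,\dots,q_t$. This is the multiplicative, twisted analogue of Zubrilina's component count, and it expresses $C_{r,\Q,\chi}$ as a combinatorial count $c(\varepsilon_1,\varepsilon_2,\dots)$ depending only on the sign pattern $\varepsilon_i=\chi(q_i)$ and on which clusters touch.

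Third, I would choose $m$ and the signs to realize each $n$. Taking $m$ divisible by all primes below a large bound pushes the first active prime out, so the tail becomes arbitrarily flat. A sign $\chi(q)=-1$ makes the image of $F_q$ straddle $1$ from both sides, and this lets neighbouring clusters merge. Hence a pattern with a single ``$+$'' prime followed by ``$-$'' primes should give small counts $1,2,3,\dots$. The values $n\ge N$ are already supplied by Theorem~\ref{rangeC1}, so only finitely many small $n$ need explicit patterns.

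Fourth, I would check that quadratic or other real characters with prescribed values on finitely many primes exist. This follows from Dirichlet's theorem together with the Chinese remainder theorem, choosing the conductor to include the primes that are to be switched off.

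The main obstacle is the second and third steps combined. We need exact control over when two clusters overlap, because a single unwanted gap or merger changes $C_{r,\Q,\chi}$. This requires showing that the tail set generated by the primes $q_{t+1},q_{t+2},\dots$ is connected, or has a controlled number of components, after the twist.

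The first thing I would try is to make the tail connected outright. Choose $\chi$ so that the tail primes come in dense clusters, many primes of nearly equal size with alternating signs, so that a Sanna-type criterion holds: each factor's gap is bounded by the total oscillation available from the later primes. Once the tail is connected, the count reduces to a finite check on the head primes, which is uniform in $r$ once $r>r_0$.
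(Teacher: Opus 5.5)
Your proposal has a genuine gap: it is a plan whose decisive step is never carried out. You do not exhibit any family of characters for which $C_{r,\Q,\chi}$ is actually computed. You yourself call the exact control of gaps and mergers ``the main obstacle,'' so nothing in the proposal guarantees that a given small $n$ is attained. The heuristics you offer for that step do not hold up.

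\begin{itemize}
\item If $\chi(q)=-1$, the partial sums $F_q(a)=\sum_{j\le a}(-1)^j q^{-jr}$ all lie in $[1-q^{-r},1]$, so $F_q$ does not straddle $1$.
\item Connectivity of the tail is not the real issue. Lemma~\ref{finiteness-first-interval} already makes $\overline{\sigma_{r,\Q,\chi}(\mathcal{N}_{j_{\Q,\chi}(r)})}$ an interval for every real $\chi$. The difficulty is the head, whose extent $j_{\Q,\chi}(r)$ depends on the values of $\chi$ at primes you do not get to choose.
\item Your CRT/Dirichlet step prescribes only finitely many values of $\chi$. The rest are dictated by residues modulo the conductor, so ``dense clusters with alternating signs'' cannot be engineered.
\item Deferring $n\ge N$ to Theorem~\ref{rangeC1} does not leave a problem that is finite uniformly in $r$, because $N=N(r)$ is uncontrolled as $r\to\infty$. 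Unless you bound $N(r)$, your explicit patterns must still realize arbitrarily many values.
\end{itemize}

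The missing idea is to use the \emph{proof} of Theorem~\ref{rangeC1} rather than its statement. There $N=C_{r,\Q,\chi_{m_{i_0}}}$ explicitly, for the principal characters $\chi_{m_i}$ with $m_i=p_\ell^{-1}\prod_{p\le p_i}p$. Every prime up to $p_i$ other than $p_\ell$ is switched off, and the tail is the single interval $[1,\zeta_{\Q,i}(r)]$ with no sign engineering. Lemma~\ref{finiteness-last} then gives $C_{r,\Q,\chi_{m_i}}=a_i+1$ in closed form (Lemma~\ref{rangeClemma}). With $x=p_\ell^{-r}$, the paper proves
\[
x\frac{1-x^{k+1}}{1-x^{k+2}}<3^{-r}<\frac{\zeta_{\Q,i+1}(r)-1}{\zeta_{\Q,i}(r)-1},
\]
where the second inequality uses Bertrand's postulate. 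This shows the counts increase by at most $1$ and are unbounded, so Theorem~\ref{rangeC2} reduces to computing the starting value.

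\begin{itemize}
\item Choosing $r_0$ with $\sum_{n\ge3}n^{-r}<2^{-r}$ and $\sum_{n\ge4}n^{-r}<3^{-r}$ forces $j_1\ge2$, hence $\ell=i_0=j_1$.
\item One then gets $C_{r,\Q,\chi_{m_{j_1}}}=2$ from $\zeta_{\Q,j_1}(r)<1+p_{j_1}^{-r}$, together with $\frac{1+y+y^2}{1+y}<\frac{1+2^{-r}y}{1-2^{-r}y}\le\zeta_{\Q,j_1}(r)$ for $y=p_{j_1}^{-r}$ (Bertrand again).
\item The value $1$ comes from $m=\prod_{k\le j_1}p_k$, whose closure is $[1,\zeta_{\Q,j_1}(r)]$.
\end{itemize}

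Your instinct to switch off small primes to flatten the tail is the right ingredient. What you lack is this one-parameter family with unit increments, together with the computation of its starting value.
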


In Section \ref{section-lower-1}, we give a lower bound on $C_{r,\Q,\chi}$ that improves upon the lower bound given by Achenjang and Berger in \cite[Theorem 5.2]{AB19}. 

\begin{theorem} \label{lowerbound1}
    Let $r \geq 3,$ and let $\chi$ be a character of modulus $m$. Then $C_{r,\Q,\chi} \geq 2^{\pi_m(2r-2)-\delta_m},$ where $\delta_m$ equals 0 if $m$ is even and equals 1 otherwise. 
\end{theorem}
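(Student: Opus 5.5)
The plan is to split the closure $\overline{\sigma_{r,\Q,\chi}(\N)}$ into $2^k$ nonempty, pairwise disjoint closed pieces, one for each pattern of which small primes divide $n$. Since finitely many pairwise disjoint nonempty closed sets cover the closure, each piece is a union of components, and so $C_{r,\Q,\chi}\ge 2^k$. Let $p_1<\dots<p_k$ be the primes $\le 2r-2$ that do not divide $m$. When $m$ is odd, the prime $2$ is among them, and I intend to discard it, so that $k=\pi_m(2r-2)-\delta_m$ and every $p_i$ is odd. The reason for discarding $2$ is that the tail estimate in the key step below uses the fact that all competing primes are odd integers.

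For $S\subseteq\{p_1,\dots,p_k\}$, let $A_S$ be the closure of $\{\sigma_{r,\Q,\chi}(n): p_i\mid n \iff p_i\in S\}$. Each $A_S$ is nonempty, and the $A_S$ cover the closure. Since all values are real, I work with logarithms; a value can be $0$ only if some local factor vanishes, and I will note this cannot happen for $r\ge 3$. Because the function is multiplicative, $\log\sigma(n)=\sum_{q^e\| n}\log f_q(e)$, where $f_q(e)=\sum_{j=0}^{e}(\chi(q)q^{-r})^j$. The local factors behave as follows.
\begin{itemize}
\item If $q\mid m$, then $f_q\equiv 1$.
\item If $\chi(q)=1$, then $f_q(e)\in[1+q^{-r},\,q^r/(q^r-1)]$ for $e\ge1$.
\item If $\chi(q)=-1$, the partial sums alternate, so $f_q(e)\in[1-q^{-r},\,1-q^{-r}+q^{-2r}]$ for $e\ge1$.
\end{itemize}
In both nontrivial cases, including $q$ shifts $\log f_q$ away from $0$ by at least about $q^{-r}$, while the exponent of $q$ can move $\log f_q$ only within a window of width $O(q^{-2r})$. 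Every prime $q$ contributes at most about $q^{-r}/(1-q^{-r})$ in absolute value.

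Now take $S\neq S'$, and let $p$ be the smallest prime in their symmetric difference, say $p\in S$. Values in $A_S$ and $A_{S'}$ share the same set of primes below $p$ among the $p_i$, but the exponents of those primes, and the primes dividing $m$ or exceeding $2r-2$, are otherwise free. To separate the two pieces I refine the sets: I fix exact exponents for the primes $p_i<p$, and I will have to handle the freedom in those exponents. The cleanest way to do this is to define the pieces lexicographically by an ordering argument. I would show by induction on $i$ that the $2^{k}$ sets are arranged in disjoint intervals, refining at each prime. The induction rests on the key inequality
\[
\log\bigl(1+p^{-r}\bigr)\ \text{(or } -\log(1-p^{-r})) \;>\; O(p^{-2r}) + \sum_{\substack{q>p\\ q \text{ prime}}} \bigl|\log\bigl(1-q^{-r}\bigr)\bigr|,
\]
for every odd prime $p\le 2r-2$. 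This says the jump from including $p$ beats the combined wobble of $p$'s own exponent and all larger primes. It gives disjointness of the intervals attached to the two choices at $p$, uniformly over everything coming after $p$. The lower-prime exponents only translate both intervals by the same amount, so they do not spoil this.

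The main obstacle is verifying the key inequality. Since $p$ is odd, all primes $q>p$ are odd integers $\ge p+2$, so the tail is at most about
\[
(p+2)^{-r}+\int_{p+2}^\infty \frac{x^{-r}}{2}\,dx=(p+2)^{-r}\Bigl(1+\frac{p+2}{2(r-1)}\Bigr).
\]
Using $p\le 2r-2$, the bracket is at most about $2+\tfrac1{r-1}$, and $(p/(p+2))^r\le e^{-2r/(p+2)}\le e^{-1}$ roughly. The inequality then reduces to $e^{-1}(2+\tfrac{1}{r-1})(1+o(1))<1-O(p^{-r})$, which should hold for $r\ge 3$. I expect the real work to be making these constants rigorous in the regime where $p$ is close to $2r-2$ and $r$ is near $3$, including the $\log$ versus linear approximations and the sign cases $\chi(p)=\pm1$. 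If the bound turns out to be too tight there, I would first try bounding the first few terms $q=p+2,p+4$ exactly and applying the integral bound only from $p+6$ onward.
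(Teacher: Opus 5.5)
\textbf{Main gap: the sets $A_S$ need not be disjoint.} This is a structural failure, not a matter of constants.

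Inside $A_S$ and $A_{S'}$, the exponents of the labelled primes below $p$ vary independently. When $m$ is odd, so does the exponent of $2$: you drop $2$ from the labels, but $\chi(2)\neq 0$ and $2$ is smaller than every $p_i$, so it belongs to no tail. Hence the claim that the lower-prime exponents ``only translate both intervals by the same amount'' is false. Your key inequality weighs the jump at $p$ only against $p$'s own exponent and the primes $q>p$. It says nothing about shifts of size roughly $8^{-r}, 9^{-r},\dots$ that come from changing the exponent of a smaller prime, and these can cancel the jump at $p$ exactly.

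For instance, take $r=9/2$ and $\chi$ trivial, so the labels are subsets of $\{3,5,7\}$. By my computation, $\log\bigl(\sigma_r(72\cdot 143)/\sigma_r(84)\bigr)\approx 5.85\cdot 10^{-6}$. This is below $\sum_{q\ge 17,\ q\text{ prime}}q^{-r}\approx 6.15\cdot 10^{-6}$, and a greedy choice of primes $q\ge 17$ should make $\sigma_r(72\cdot 143)$ a limit of values $\sigma_r(84X)$, with $X$ a squarefree product of primes $\ge 17$. Now $84X=2^2\cdot 3\cdot 7\cdot X$ carries the label $\{3,7\}$, while $72\cdot 143=2^3\cdot 3^2\cdot 11\cdot 13$ carries the label $\{3\}$. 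So $A_{\{3,7\}}$ and $A_{\{3\}}$ intersect, and no sharper verification of your key inequality can fix the plan as stated.

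\textbf{Missing idea.} Compare two integers through their smallest differing divisor $d_0$ with $\chi(d_0)\neq 0$, composite divisors included. Then choose the classes so that $d_0$ is forced to be at most $r-1$, or odd.

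The paper does this by recording all divisors $d\le r-1$ coprime to $m$, which includes $2,4,\dots$ and composites. Only for $n$ with $2\notin B_1(n)$ does it also record the divisors in $[r-1,2r-2]$ coprime to $2m$. A uniform gap then follows from two inequalities:
\begin{itemize}
\item $d_0^{-r}>\sum_{n>d_0}n^{-r}$ for $d_0\le r-1$;
\item $d_0^{-r}>\sum_{n\ge d_0+2,\ n\text{ odd}}n^{-r}$ for odd $d_0\le 2r-2$.
\end{itemize}
When $m$ is even, your coarse labels are in fact separated, but by this divisor argument, not by your prime-wise inequality. When $m$ is odd, you must split off the even $n$ first.

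\textbf{Second gap: complex characters.} The theorem covers arbitrary Dirichlet characters, which can take non-real values. Your logarithms and interval ordering assume $\chi$ is real. The paper handles this by applying $\Re_\theta$ with $\theta=\chi(d_0)$, which still yields $|\sigma_{r,\Q,\chi}(n)-\sigma_{r,\Q,\chi}(n')|\ge d_0^{-r}-(\text{tail})$.
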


Generalizing this result, in Section \ref{section-lower-2}, we give a lower bound on $C_{r,K}$ for finite Galois extensions $K$ of $\Q$.

\begin{theorem} \label{lowerbound2}
    Let $K/\Q$ be a finite Galois extension. For every $\varepsilon > 0,$ there exist constants $\mu_{K,\varepsilon}, r_{K,\varepsilon} > 0$ such that if $r > r_{K, \varepsilon},$ then $$\log C_{r,K} > \mu_{K, \varepsilon}\left( \frac{r^\frac{1}{1+\varepsilon}}{\log r} \right).$$
\end{theorem}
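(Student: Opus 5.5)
Since $K/\Q$ is Galois, every rational prime $p$ unramified in $K$ factors as $p\mathcal{O}_K=\p_1\cdots\p_g$, where all $\p_i$ have the same residue degree $f_p$ and $g f_p=[K:\Q]=:n$. I will focus on the primes that split completely, i.e.\ $f_p=1$. Each such $p$ gives $n$ distinct prime ideals of norm $p$.

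Write $\sigma=\sigma_{r,K,\chi_0}$. It is multiplicative over coprime ideals, with $\sigma(\p^k)=\sum_{j\le k}N(\p)^{-jr}$. The closure of the image is therefore a product set
\[
\overline{\sigma(I_K)}=\prod_{\p}\overline{\{\sigma(\p^k):k\ge0\}},
\]
where the factors are closed subsets of $[1,(1-N(\p)^{-r})^{-1}]$. The plan mirrors the proof of Theorem \ref{lowerbound1}, which I assume proceeds as follows. One finds a set $S$ of primes such that choosing, for each $p\in S$, whether $p$ divides the ideal produces values separated by gaps that no choice on the other primes can fill. This forces at least $2^{|S|}$ components.

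Concretely, the gap criterion is this. Suppose the ideals are ordered by the values they give on the first several primes. A gap in the closure appears between $x$ and the next attainable value whenever
\[
\frac{\sigma(\p\text{-part})\text{ step}}{1}>\prod_{\q\text{ later}}\frac{1}{1-N(\q)^{-r}}.
\]
In words, the jump $1+N(\p)^{-r}$ obtained by including $\p$ must exceed the maximal contribution $\prod_{N(\q)>N(\p)}(1-N(\q)^{-r})^{-1}$ of all larger primes. Over $\Q$ this works for primes $p\le 2r-2$, since $p^{-r}$ dominates $\sum_{q>p}q^{-r}\approx p^{1-r}/(r-1)$ once $r$ is large relative to $p$. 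Over $K$, the sum over larger primes is essentially $\sum_{\q}N(\q)^{-r}$. This is dominated by split primes and is at most $n\sum_{q>p}q^{-r}$, plus small contributions from norms $p^f$ with $f\ge2$. So the condition becomes roughly $p^{-r}>n\,p^{1-r}/(r-1)$, i.e.\ $p\lesssim (r-1)/n$.

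There is one complication. The $n$ primes above a split $p$ all have the same norm, so including $\p_1$ versus $\p_2$ gives equal values; those choices do not separate components. I will therefore count subsets by how many primes above $p$ are included, which gives $n+1$ options per $p$. Even just binary choices, one per split $p\le c r/n$, give
\[
\log C_{r,K}\ge (\log 2)\,\#\{p\le cr/n:\ p \text{ splits completely}\}.
\]
I will verify that inclusion patterns with distinct multiplicities remain separated. The multiset of norms is what matters, and the gaps between successive products still exceed the tail by the same estimate, with the constants adjusted to account for up to $n$ copies.

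The main obstacle is counting split primes up to $x=cr/n$ with $x$ only of size $r$, and doing so uniformly enough to get a lower bound of the form $x^{1/(1+\varepsilon)}/\log x$. The Chebotarev density theorem gives $\pi_{\rm split}(x)\sim x/(n\log x)$, but only for $x\ge x_0(K)$. Since $K$ is fixed and $r\to\infty$, this is in fact fine. The $\varepsilon$ loss presumably comes from a weaker tail estimate. The sum over larger prime ideals must be bounded without assuming they are all split, and bounding it by the Dedekind zeta tail costs a factor; the cleanest way is to use $\#\{\q:N(\q)\le y\}\ll_K y$ together with a small power loss. Accordingly, I would take $S$ to be the split primes up to $r^{1/(1+\varepsilon)}$. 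For these, $p^{-r}$ exceeds the tail $\sum_{N(\q)>p}N(\q)^{-r}\ll_K p^{1-r}/(r-1)$ by a wide margin once $r>r_{K,\varepsilon}$. Chebotarev then yields $\#S\gg_K r^{1/(1+\varepsilon)}/\log r$, and hence the stated bound with $\mu_{K,\varepsilon}\approx \log 2/(n(1+\varepsilon))$.
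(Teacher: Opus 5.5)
There is a genuine gap, and it sits exactly at the step you defer (``I will verify that inclusion patterns with distinct multiplicities remain separated''). Your separation criterion is that the jump $1+N(\p)^{-r}$ beats $\prod_{N(\q)>N(\p)}(1-N(\q)^{-r})^{-1}$. That is precisely the $r$-mighty norm condition of Section~\ref{section-last}, and what it yields is Lemma~\ref{lemma-gaps}: $M$ such norms give $C_{r,K}\ge M+1$, i.e.\ one new gap per norm. It separates ideals having a prime factor of norm $\le p$ from those that do not. It says nothing about two ideals that both have such factors but differ in their pattern, so it only supports something like $\log C_{r,K}\gg\log r$.

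Moreover, classes defined by which primes above $p\in S$ divide $I$, with ``the other primes'' free, do not have disjoint closures. Free exponents at primes of small norm (say above $2$) make the value sets of different classes overlap. To get exponentially many separated classes you must partition by the full divisor data at every small norm, $A(I)=(m_I(2),\dots,m_I(h))$, as in Theorem~\ref{partition}. If $\fa',\bb'$ first differ at norm $d_0$, the relevant estimate is
\[
\sigma_{r,K}(\fa')-\sigma_{r,K}(\bb')\ \ge\ d_0^{-r}-\sum_{n>d_0}m_{\bb'}(n)\,n^{-r}\ \ge\ d_0^{-r}-\sum_{n>d_0}a_K(n)\,n^{-r}.
\]
This is a tail over \emph{all} ideals of larger norm, composite ones included. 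It cannot be replaced by a sum over prime ideals or by an Euler product over larger primes, because $\fa'$ and $\bb'$ may differ in exponents at small primes and in which prime of a given norm they contain. Your idea of counting multiplicities is right, though. It gives $s+1$ options per completely split prime, where $s=[K:\Q]$ is your $n$, and this matches $\prod_n(b_K(n)+1)$.

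This also corrects your account of the $\varepsilon$. When $r$ is large relative to $d$, the tail $\sum_{n>d}a_K(n)n^{-r}$ is governed by its first few terms $n=d+1,d+2,\dots$. An averaged count such as $\#\{\q:N(\q)\le y\}\ll_K y$, or even the count of all ideals of norm at most $y$, cannot control it, since it says nothing about how many ideals have norm exactly $d+1$. What is needed is a pointwise bound. The paper uses $a_K(n)\le d_s(n)\le\eta_{s,\varepsilon}n^{\varepsilon}$, which gives $h_K(r)\ge\lfloor((r-\varepsilon-1)/\eta_{K,\varepsilon})^{1/(1+\varepsilon)}\rfloor$; that is the source of the exponent $1/(1+\varepsilon)$. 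Chebotarev for completely split primes up to this threshold then finishes as you describe.

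Your side claim that all split primes up to $cr/n$ can be used is unsupported. With the correct tail, the comparison $d^{-r}>a_K(d+1)(d+1)^{-r}$ requires $a_K(d+1)<(1+1/d)^r\le e^{r/d}$. The right-hand side is bounded for $d\asymp r$, while $a_K(d+1)$ is unbounded along such $d$ (take $d+1$ with many completely split prime factors).
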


This result implies that fixing $K$, we have $\sup_{r > 1} C_{r,K} = \infty.$ As a complement to this result, in Section \ref{section-last}, we instead fix $s = [K:\Q]$ and $r$ and understand how $C_{r,K}$ behaves as $K$ changes.

\begin{theorem} \label{fix-s}
    Let $r > 1$ be a real number and $s \geq 2$ be an integer. Then $$\sup_{\text{degree-}s \text{ extensions }K/\Q} C_{r,K} = \infty.$$
\end{theorem}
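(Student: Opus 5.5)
The plan is to make $C_{r,K}$ large by building degree-$s$ fields $K$ in which a few carefully spaced rational primes split completely, while every other rational prime in a long initial range is inert. The split primes then contribute large, well-separated local factors. The inert primes contribute only factors of size $1+O(p^{-sr})$, which are negligible because $sr\ge 2r>r+1$. The goal is to manufacture, for each $n$, at least $n$ disjoint gaps in $\overline{\sigma_{r,K}(I_K)}$. No earlier theorem is needed; everything rests on multiplicativity of $\sigma_{r,K}$ over prime ideals and on the existence of fields with prescribed local behaviour.

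\emph{Step 1 (construction of $K$).} Fix $n$ and a bound $B$, both to be chosen. Choose primes $q_1<q_2<\dots<q_n\le B$ growing very fast, say $q_{i+1}^{\,r-1}\ge M q_i^{\,2r}$ for a large constant $M=M(r,s)$. By the Chinese remainder theorem on coefficients, choose a monic $f\in\Z[x]$ of degree $s$ with the following local behaviour:
\begin{itemize}
\item $f$ splits into distinct linear factors modulo each $q_i$;
\item $f$ is irreducible modulo every other prime $p\le B$.
\end{itemize}
Then $f$ is irreducible over $\Q$. By Hensel's lemma (after also requiring $p\nmid\operatorname{disc} f$ at these primes), each $q_i$ splits completely in $K=\Q[x]/(f)$ and every other $p\le B$ is inert. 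Thus $K$ has degree $s$, exactly $s$ prime ideals of norm $q_i$ above each $q_i$, and every other prime ideal of norm $\le B$ has norm $p^s\ge p^2$.

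\emph{Step 2 (the separation lemma).} Write $F(x)=\prod_{\mathfrak P:\,N\mathfrak P>x}(1-N\mathfrak P^{-r})^{-1}$ for the product of the maximal local factors beyond $x$. For $q_i<x\le B$, the prime ideals with norm in $(x,B]$ are either among the $s$ ideals above some $q_j$ with $j>i$, or have norm $p^s\ge p^2$; those with norm $>B$ number at most $s$ above each $p>B$. This gives
\[
\log F(q_i)\ \ll_{s}\ \sum_{j>i} q_j^{-r}+\sum_{p>q_i}p^{-2r}+\sum_{p>B}p^{-r}\ \ll\ q_{i+1}^{-r}+q_i^{1-2r}+B^{1-r}.
\]
By the spacing condition, and by taking $B$ large, this is much smaller than $q_i^{-r}$, because $r>1$ gives $1-2r<-r$.

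I would then prove the following. For a fixed ideal class of ``small part'' $I_0$ (the part of $I$ supported on primes of norm $<q_i$), the values $\sigma(I)$ with $v_{\mathfrak P}(I)=0$ for all $\mathfrak P\mid q_i$ lie in $[\sigma(I_0),\sigma(I_0)F(q_i)]$. The values with some $\mathfrak P\mid q_i$ dividing $I$ are at least $\sigma(I_0)(1+q_i^{-r})$. Hence each ``small part'' value $v$ produces an empty interval $(vF(q_i),v(1+q_i^{-r}))$ just above the cluster at $v$.

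\emph{Step 3 (counting gaps).} These gaps at different $v$ might overlap other clusters. To avoid this, I would restrict to the top-level gaps. Take $v$ equal to the products $\prod_{j<i}\sigma(\mathfrak P_j^{e_j})$ coming only from the first prime above each $q_j$, ordered lexicographically. The rapid growth of the $q_j$ makes the resulting $2^{\,i-1}$ clusters pairwise disjoint, in the same way as binary expansions with super-geometric digits. Alternatively, and more simply, I would use the single chain of gaps just above $1,\;1\cdot(1+q_1^{-r}),\;\dots$, which already yields $\ge n$ distinct gaps between $1$ and $\zeta_K(r)$, so $C_{r,K}\ge n+1$. Letting $n\to\infty$ proves the theorem.

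The main obstacle is Step 3: ruling out that the closure fills a putative gap with values coming from a different small part, especially from the other $s-1$ primes above each $q_j$ and from the inert primes below $q_i$. My first attempt would be to compare each gap against the sharp upper bound $\sigma(I_0)F(q_i)$ and the lower bound from divisibility by the next split prime, choosing the spacing $M$ (depending on $s$ and $r$) large enough that every cross term is dominated.
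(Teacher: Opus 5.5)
There is a genuine gap, and it lies in Step 2, not Step 3. When you bound $\log F(q_i)$, you count inert primes only for $p>q_i$. But an inert prime $p$ enters $F(q_i)$ as soon as its norm $p^s$ exceeds $q_i$, that is, for every $p>q_i^{1/s}$. Your construction makes all primes in $(q_i^{1/s},q_i]$ other than the $q_j$ inert. Already the primes in $(q_i^{1/s},2q_i^{1/s}]$ each contribute at least $2^{-sr}q_i^{-r}$, so
\[
\sum_{\substack{p>q_i^{1/s}\\ p\ \text{inert}}} p^{-sr}\ \gg_{r,s}\ \frac{q_i^{1/s-r}}{\log q_i}.
\]
This is far larger than $\log(1+q_i^{-r})\approx q_i^{-r}$. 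For instance, with $s=r=2$ and $q_i\approx 10^6$, the inert primes just above $10^3$ contribute about $5\cdot 10^{-11}$, against $q_i^{-r}\approx 10^{-12}$.

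Hence $F(q_i)>1+q_i^{-r}$ once $q_i$ exceeds a bound depending only on $r$ and $s$. Every interval $(vF(q_i),v(1+q_i^{-r}))$ is then empty, and since your $q_i$ grow rapidly, all but boundedly many of the intended gaps do not exist. Making every other prime up to $B$ inert is exactly what destroys the gaps: inert primes slightly above $q_i^{1/s}$ have norms slightly above $q_i$.

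The missing idea, which is the paper's, is to force every prime in $(q_i^{1/s},q_i]$ to split completely as well. Those ideals then have norm at most $q_i$ and drop out of $F(q_i)$. This requires:
\begin{itemize}
\item $q_i^{1/s}>q_{i-1}$, so the blocks are disjoint;
\item $q_i$ so large that the block's total contribution $s\sum_{p>q_i^{1/s}}p^{-r}\ll_{r,s}q_i^{(1-r)/s}$ is negligible against $q_j^{-r}$ for all $j<i$ (this is where $r>1$ is used).
\end{itemize}
Inert primes then enter $F(q_i)$ only via $p>q_i$, costing $\ll q_i^{1-sr}=o(q_i^{-r})$. Primes beyond a final cutoff $X$ cost at most $s\sum_{p>X}p^{-r}$, which is small for $X$ large. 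Your Step 1 adapts directly, provided all split primes exceed $s$.

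Once $F(q_i)<1+q_i^{-r}$ is secured, your Step 3 obstacle disappears if you use only $v=1$. Every ideal either has a prime factor of norm at most $q_i$, giving $\sigma_{r,K}\ge 1+q_i^{-r}$, or has none, giving $\sigma_{r,K}\le F(q_i)$. So $(F(q_i),1+q_i^{-r})$ misses the entire image, not just one cluster. For $i<j$ one has $F(q_i)\ge(1-q_j^{-r})^{-1}>1+q_j^{-r}$, so these $n$ gaps are pairwise disjoint and $C_{r,K}\ge n+1$. No cross-term or binary-cluster analysis is needed.
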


We briefly illustrate some rough intuition for our results.
Below are the plots of the image of the divisor function $\sigma_{2, \Q, \chi}$ on the set $\{1,2,\dots,10^6\},$ where $\chi$ is the principal character $\chi_1$ mod 1 (the constant character) and $\chi_5$ mod 5, respectively.

\begin{figure}[H]
\centering
\includegraphics[scale=0.6]{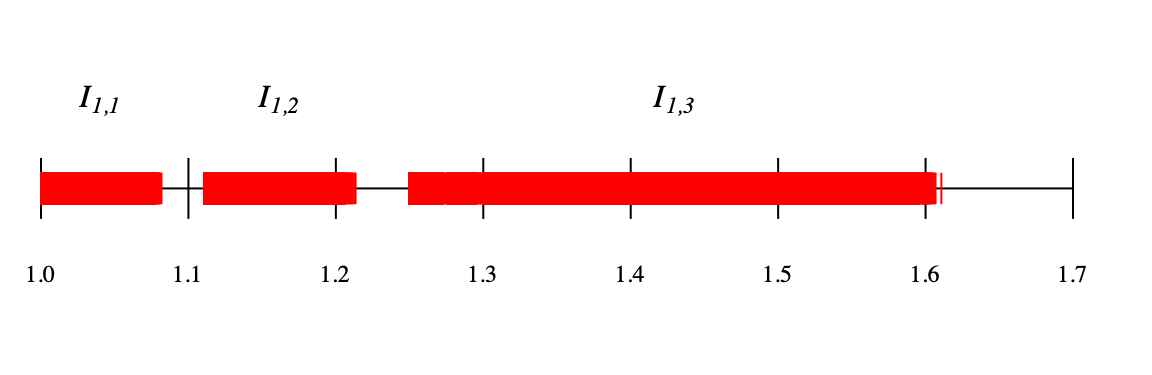}
\caption{The image $\sigma_{2,\Q,\chi_1}(\{1,2,\dots,10^6\}).$}
\includegraphics[scale=0.6]{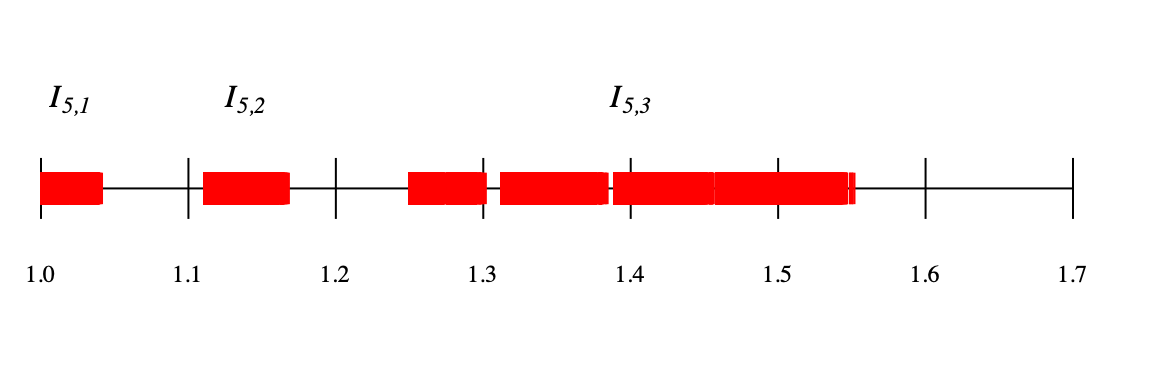}
\caption{The image $\sigma_{2,\Q,\chi_5}(\{1,2,\dots,10^6\}).$}
\end{figure}

As we can reasonably guess from these two figures, the numbers of connected components $C_{2,\Q,\chi}$ are finite. Indeed, they both have 3 connected components; let $I_{1,1}, I_{1,2},$ and $I_{1,3}$ be the connected components for $\chi_1$, and let $I_{5,1},I_{5,2},$ and $I_{5,3}$ be the connected components for $\chi_5.$ We notice that 
\begin{align*}
    I_{i,1} &= \sigma_{2,\Q,\chi_i}(\{n \in \Z_{>0} : 2 \nmid n, 3 \nmid n\})\\
    I_{i,2} &= \sigma_{2,\Q,\chi_i}(\{n \in \Z_{>0} : 2 \nmid n, 3 \mid n\}) \\
    I_{i,3} &= \sigma_{2,\Q,\chi_i}(\{n \in \Z_{>0} : 2 \mid n\})
\end{align*}
for $i = 1, 5.$
In this paper, we will prove this observation that these connected components are determined by the divisibility of the input integer by primes and use it to deduce results about $C_{r,K,\chi}$ in general.

\section{Finiteness of $C_{r,K,\chi}$ for $r > 1$ and real $\chi$} \label{section-finiteness}

The goal of this section is to prove Theorem \ref{finiteness}.

Throughout this section, let $\chi$ be a real character with modulus $m$. 
Order the prime ideals in $\mathcal{O}_K$ by increasing ideal norm; arbitrarily order prime ideals of the same norm. Denote the $i$-th prime ideal in this ordering by $\mathfrak{p}_i.$ 
Let $S_m$ be the finite set $\{i \in \N : \gcd(N(\p_i), m) \neq 1\}$ of indices of prime ideals whose norm are not coprime to $m$. Let $\mathcal{N}_j := \{ I \in I_K : \text{for all }1 \leq i \leq j, \p_i \nmid I\}$ denote the set of ideals whose prime ideal factors have index larger than $j$.

To prove the theorem, we prove a series of useful lemmas.


The following lemma establishes a density of norms of certain prime ideals in $\mathcal{O}_K$. Note that it holds for any choice of character $\chi$ with modulus $m$.

\begin{lemma} \label{adj-ideal}
    Consider the prime ideals $\p$ in $\mathcal{O}_K$ such that $\chi(N(\p)) = 1$, and let $\{\p_i'\}_{i \in \N}$ be the sequence of these ideals once ordered by increasing ideal norm. Then $$\lim_{i \rightarrow \infty} \frac{N(\p_{i+1}')}{N(\p_i')} = 1.$$
\end{lemma}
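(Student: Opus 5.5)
The plan is to find a sufficiently dense subfamily of the primes $\p$ with $\chi(N(\p))=1$, namely degree-one primes lying over rational primes $p \equiv 1 \pmod m$. Let $L = K(\zeta_m)$, where $\zeta_m$ is a primitive $m$-th root of unity. If a rational prime $p$ splits completely in $L$, then:
\begin{itemize}
\item $p$ splits completely in $\Q(\zeta_m)$, so $p \equiv 1 \pmod m$;
\item $p$ splits completely in $K$, so some (indeed every) prime $\p$ of $\mathcal{O}_K$ above $p$ has $N(\p) = p$.
\end{itemize}
For such $\p$ we get $\chi(N(\p)) = \chi(p) = \chi(1) = 1$. So every prime ideal of $\mathcal{O}_K$ above a rational prime in
$$\mathcal{P} := \{p \text{ prime} : p \text{ splits completely in } L\}$$
appears in the sequence $\{\p_i'\}$. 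In particular the sequence is infinite, so the limit makes sense.

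Next I would invoke the Chebotarev density theorem in its asymptotic (prime number theorem) form for the identity class of the Galois closure of $L/\Q$. Splitting completely in $L$ is the same as splitting completely in its Galois closure $\widetilde{L}$, so with $d = [\widetilde{L}:\Q]$ this gives
$$\pi_{\mathcal{P}}(x) := \#\{p \le x : p \in \mathcal{P}\} \sim \frac{x}{d\log x}.$$
From this, for any fixed $\varepsilon > 0$,
$$\pi_{\mathcal{P}}((1+\varepsilon)x) - \pi_{\mathcal{P}}(x) \sim \frac{\varepsilon x}{d \log x} \to \infty .$$
Hence there is $x_0(\varepsilon)$ such that every interval $(x, (1+\varepsilon)x]$ with $x \ge x_0$ contains some $p \in \mathcal{P}$.

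To conclude, take $i$ large enough that $N(\p_i') \ge x_0$, and set $x = N(\p_i')$. There is $p \in \mathcal{P}$ with $x < p \le (1+\varepsilon)x$. A prime $\p$ above $p$ belongs to the sequence and has norm strictly larger than $N(\p_i')$. Since the sequence is ordered by norm, this forces
$$N(\p_i') \le N(\p_{i+1}') \le N(\p) = p \le (1+\varepsilon)N(\p_i').$$
So the ratio $N(\p_{i+1}')/N(\p_i')$ lies in $[1, 1+\varepsilon]$ for all large $i$, and letting $\varepsilon \to 0$ gives the limit $1$. The higher-degree primes and the ties among equal norms do no harm, because they only add terms to the sequence, and an added term can only shrink consecutive gaps.

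The main obstacle is the density input: we need an asymptotic, not merely positive natural density. Either the effective Chebotarev theorem or the Chebotarev version of the prime ideal theorem for $\widetilde{L}$ (counting degree-one primes of $\widetilde{L}$, which have norm $p$) supplies $\pi_{\mathcal{P}}(x) \sim x/(d\log x)$, and that is exactly what makes the short-interval count tend to infinity.
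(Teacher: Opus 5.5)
Your argument is correct, but it takes a different route from the paper's. The paper works over $K$. It transports $\chi$ to the character $\tilde\chi=\chi\circ\res$ of $\Gal(K(\zeta_m)/K)$ and uses $\res(\Frob_{\p})=\Frob_p^{f_p}$ to get $\tilde\chi(\Frob_{\p})=\chi(N(\p))$. It then splits the \emph{entire} set of unramified $\p$ with $\chi(N(\p))=1$ into the Frobenius classes $\sigma\in\ker\tilde\chi$, applies Chebotarev to each class in the abelian extension $K(\zeta_m)/K$, and merges the finitely many resulting sequences.

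You never describe the full set. Instead you exhibit one subfamily that is already dense enough: the primes of $K$ over rational $p$ splitting completely in $K(\zeta_m)$. You then observe that inserting extra terms into a norm-ordered sequence can only shrink consecutive ratios. This gives you two simplifications:
\begin{enumerate}
\item You avoid the Frobenius-restriction bookkeeping entirely; you only need $N(\p)=p\equiv 1 \pmod m$.
\item Your only density input is the identity class over $\Q$. As you note, this already follows from Landau's prime ideal theorem for the Galois closure $\widetilde{L}$: unramified degree-one primes of $\widetilde{L}$ come in groups of $d$ over completely split $p$, and primes of degree $\ge 2$ contribute only $O(\sqrt{x})$. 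So the general Chebotarev theorem is not needed.
\end{enumerate}
The paper's approach yields more information, namely a positive density for every Frobenius class and hence an asymptotic for the whole sequence, but the lemma does not need it.

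One small slip: the implication ``$p$ splits completely in $\Q(\zeta_m)$, hence $p\equiv 1 \pmod m$'' requires $p\nmid m$. It fails for $m=2$, $p=2$, since $\Q(\zeta_2)=\Q$ while $\chi(2)=0$. This is harmless, because you only use primes $p>x_0$, and you may take $x_0>m$.
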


\begin{proof}
    Because there are finitely many prime ideals in $K$ that ramify in $K(\zeta_m),$ it is sufficient to prove the lemma for the sequence $\{\mathfrak p_i'\}_{i \in \N}$ of prime ideals $\p$ in $K$ that are unramified in $K(\zeta_m)$, satisfy $\chi(N(\p)) = 1,$ and are ordered by non-decreasing ideal norm $N$.
    
    Let $F = \Q(\zeta_m)$ and $L = K(\zeta_m).$ 
    Consider the isomorphism
    \begin{align*}
        \text{res} : \Gal(K(\zeta_m)/K) &\ra \Gal(\Q(\zeta_m)/\Q(\zeta_m) \cap K).
    \end{align*} Viewing $\Gal(\Q(\zeta_m)/\Q(\zeta_m) \cap K)$ as a subgroup of $\Gal(\Q(\zeta_m) / \Q) \cong (\Z/ m\Z)^\times,$ the Dirichlet character $\chi$ restricts to a character $$\Tilde{\chi} := \chi \circ \text{res}$$ of $\Gal(K(\zeta_m) / K).$
    For any prime ideal $\mathfrak{p}$ in $K$ that is unramified in $K(\zeta_m)/K$ and lies over $p\Z,$ the map $\res$ sends $\Frob_{\mathfrak{p}, K(\zeta_m)/K}$ to $$\Frob_{p, \Q(\zeta_m)/\Q}^{f_p} \in \Gal(\Q(\zeta_m) / \Q(\zeta_m) \cap K),$$ where $f_p$ is the inertia degree of $\p$ over $p\Z.$ 
    
    Now, let $H:= \ker \Tilde{\chi}$, which is a nonempty subgroup of $\Gal(K(\zeta_m)/K).$ For any $\sigma \in H,$ consider the prime ideals $\p$ in $K$ that are unramified in $K(\zeta_m)$ and satisfy $\Frob_{\p , K(\zeta_m)/K} = \sigma.$ Ordering these prime ideals by non-decreasing ideal norm $N$, we denote the resulting sequence by  $\{\p_{\sigma, i}\}_{i \in \N}.$ Because $\Tilde{\chi}(\Frob_{\p, K(\zeta_m)/K}) = \chi(N(\p)),$ we see that
    $$\{\p_i'\}_{ i \in \N} = \bigsqcup_{\sigma \in H} \{\p_{\sigma, i}\}_{i \in \N}.$$
    By Chebotarev's density theorem, we have $$\#\{\mathfrak p_{\sigma, i} : N(\mathfrak p_{\sigma,i}) \leq X\} \sim c_\sigma \frac{X}{\log X}$$ for some constant $c_\sigma > 0,$ so $$\lim_{i \ra \infty} \frac{N(\p_{\sigma, i+1})}{N(\p_{\sigma, i})} = 1.$$ As this holds for each $\sigma$ in the finite set $H$, it follows that $$\lim_{i \ra \infty} \frac{N(\p_{i+1}')}{N(\p_{i}')} = 1.$$
\end{proof}

\begin{definition}
    For $r > 1$, a nonnegative integer $i$, and a finite extension $K/\Q,$ define $$\zeta_{K,i}(r) = \prod_{k > i} \frac{1}{1-N(\p_k)^{-r}}.$$ For a positive integer $m$, further define $$\zeta_{K,i,m} (r) = \prod_{k>i; k\not\in S_m} \frac{1}{1-N(\p_k)^{-r}}.$$
\end{definition} 

Let $j_+ = \min\{j \in \mathbb N \setminus S_m : \chi(N(\mathfrak p_j)) = 1\}.$ 
For each integer $j \geq j_+$, let $\mathfrak{q}_j$ be a prime ideal with the largest norm less than or equal to $N(\mathfrak{p}_j)$ such that $\chi(N(\mathfrak{q}_j)) = 1.$  

\begin{lemma} [{\cite[Lemma 2.2]{S17}}] \label{finiteness-finitely-many-j}
    There exist only finitely many $j \in \N \setminus S_m$ satisfying $j \geq j_+$ such that $$\zeta_{K,j,m}(r) < 1 + N(\mathfrak{q}_j)^{-r}.$$
\end{lemma}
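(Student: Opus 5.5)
Since $r>1$ is fixed, the strategy is to show that $\zeta_{K,j,m}(r)-1$ is much larger than $N(\q_j)^{-r}$ once $j$ is large. The point is that the tail product $\zeta_{K,j,m}(r)$ contains many factors whose norms are only slightly larger than $N(\p_j)$, while $N(\q_j)$ is close to $N(\p_j)$ by Lemma \ref{adj-ideal}.

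\emph{Lower bound for the tail.} First, for all $j$,
$$\zeta_{K,j,m}(r)-1 \;\ge\; \sum_{k>j,\,k\notin S_m} N(\p_k)^{-r}.$$
This follows by expanding the Euler product and keeping only the terms coming from single primes. Since $S_m$ is finite, for large $j$ every $k>j$ lies outside $S_m$. I now restrict the sum to the primes $\p_k$ with $N(\p_j)<N(\p_k)\le 2N(\p_j)$. By the prime ideal theorem (Landau), the number of prime ideals of norm at most $X$ is $\sim X/\log X$. Hence the number of prime ideals with norm in $(X,2X]$ is $\gg X/\log X$, which exceeds $2^{r}$ for large $X$. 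Each such prime contributes at least $(2N(\p_j))^{-r}$. This gives
$$\zeta_{K,j,m}(r)-1 \;\ge\; c\,\frac{N(\p_j)}{\log N(\p_j)}\,(2N(\p_j))^{-r}$$
for some $c>0$ and all large $j$. One subtlety is that several prime ideals may share the norm $N(\p_j)$. This does no harm, because I only count primes of norm strictly bigger than $N(\p_j)$.

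\emph{Comparison with $\q_j$.} By Lemma \ref{adj-ideal}, the ratios of consecutive norms in the sequence of primes with $\chi(N(\p))=1$ tend to $1$. That sequence is infinite by Chebotarev, as in the proof of that lemma. Since $\q_j$ is the largest such prime with norm at most $N(\p_j)$, the next prime in that sequence has norm greater than $N(\p_j)$. Therefore $N(\p_j)/N(\q_j)\to 1$, and for large $j$ we get $N(\q_j)\ge \tfrac12 N(\p_j)$, so $N(\q_j)^{-r}\le 2^{r}N(\p_j)^{-r}$.

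\emph{Conclusion.} Combining the two bounds, the inequality $\zeta_{K,j,m}(r)<1+N(\q_j)^{-r}$ would force
$$c\,\frac{N(\p_j)}{\log N(\p_j)}\,2^{-r}\le 2^{r}.$$
This fails once $N(\p_j)$ is large, and $N(\p_j)\to\infty$ because only finitely many ideals have bounded norm. Hence only finitely many $j$ can satisfy the inequality.

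The main obstacle is making the counting in $(X,2X]$ rigorous uniformly in $j$. I would handle it by applying the asymptotic for $\pi_K(2X)-\pi_K(X)$ directly. Alternatively, I could use only degree-one primes counted via Chebotarev; either version suffices, since only a bound of order $X^{1-o(1)}$ is needed against the constant $4^r$.
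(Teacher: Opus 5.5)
Your proof is correct, but it argues differently from the paper.

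The paper, following Sanna, never estimates the size of the tail. It sets $y_j=\zeta_{K,j,m}(r)/(1+N(\q_j)^{-r})$ and notes that for $j+1\notin S_m$ one has $y_{j+1}/y_j=(1+N(\q_j)^{-r})(1-N(\p_{j+1})^{-r})/(1+N(\q_{j+1})^{-r})$. It bounds this by $(1+N(\q_j)^{-r})\frac{1-N(\QQ_j)^{-r}}{1+N(\QQ_j)^{-r}}$, where $\QQ_j$ is the next prime with $\chi=1$. Lemma \ref{adj-ideal}, in the form $N(\QQ_j)<2^{1/r}N(\q_j)$, then gives $y_{j+1}/y_j<1$ for large $j$. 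Since $y_j\to1$ and is eventually non-increasing, $y_j\ge1$ eventually.

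You instead compare sizes directly. The prime ideals with norm in $(N(\p_j),2N(\p_j)]$ give $\zeta_{K,j,m}(r)-1\gg N(\p_j)^{1-r}/\log N(\p_j)$, which you set against $N(\q_j)^{-r}\le 2^rN(\p_j)^{-r}$.

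What each route buys:
\begin{itemize}
\item Your version is more quantitative: it shows the inequality fails by a factor of order $N(\p_j)/\log N(\p_j)$. It also needs only the weak consequence $N(\q_j)\ge\tfrac12N(\p_j)$ of Lemma \ref{adj-ideal}. The price is importing Landau's prime ideal theorem.
\item The paper's telescoping argument is softer. It uses nothing beyond the ratio property of the $\chi=1$ primes and $y_j\to1$.
\end{itemize}

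Your approach could in fact dispense with Landau. If consecutive $\chi=1$ norms have ratio tending to $1$, the number of such primes with norm in $(X,2X]$ tends to infinity, and that is all you need to beat the constant $4^r$.

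One minor precision: if several $\chi=1$ primes share the norm $N(\q_j)$, the ``next prime in the sequence'' must be taken after the last of them. Only then is its norm guaranteed to exceed $N(\p_j)$.
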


\begin{proof}
    Consider the sequence $(y_j)_{j \geq j_+}$ defined by $$y_j := \frac{\zeta_{K,j,m}(r)}{1 + N(\mathfrak{q}_j)^{-r}}.$$ For any $j \geq j_+,$ if $j + 1 \in S_m$, then $y_{j+1}=y_j.$ If $j + 1 \not\in S_m,$ then we compute 
    $$ \frac{y_{j+1}}{y_j} = (1 + N(\mathfrak{q}_j)^{-r}) \cdot \frac{1 - N(\p_{j+1})^{-r}}{1 + N(\mathfrak{q}_{j+1})^{-r}} < (1 + N(\mathfrak{q}_j)^{-r}) \cdot \frac{1 - N(\p_{j+1})^{-r}}{1 + N(\mathfrak{p}_{j+1})^{-r}},$$ because $N(\q_{j+1}) \leq N(\p_{j+1}).$ Now, let $\mathfrak{Q}_j$ be a prime ideal $\mathfrak{Q}$ with the least norm greater than $N(\q_j)$ such that $\chi(N(\QQ)) = 1.$ Then $N(\QQ) \geq N(\p_{j+1}).$ The function from $\R$ to $\R$ defined by $x \mapsto \frac{1-x}{1+x}$ is decreasing, so 
    $$\frac{y_{j+1}}{y_j} < (1 + N(\q_j)^{-r}) \cdot \frac{1 - N(\p_{j+1})^{-r}}{1 + N(\p_{j+1})^{-r}} < (1 + N(\q_j)^{-r}) \cdot \frac{1- N(\QQ_j)^{-r}}{ 1+ N(\QQ_j)^{-r}}.$$
    By Lemma \ref{adj-ideal}, we have $$\lim_{j\rightarrow \infty} \frac{N(\QQ_j)}{N(\q_j)} = 1.$$ 
    Then there exists $j_*$ such that if $j > j_*,$ then $\frac{N(\QQ_j)}{N(\q_j)} < 2^{1/r}$ and thus that $N(\QQ_j)^{-r} > (2 N(\q_j)^r + 1 )^{-1}.$ As a result, for $j > j_*,$ we find 
    $$\frac{y_{j+1}}{y_j} < (1 + N(\q_j)^{-r}) \cdot \frac{1 - (2 N(\q_j)^r + 1 )^{-1}}{1 + (2 N(\q_j)^r + 1 )^{-1}} = 1.$$  Therefore, the sequence $\{y_j\}_{j \geq j_+}$ is eventually non-increasing. Yet $\lim_{j \rightarrow \infty } y_j = 1,$ so $y_j \geq 1$ for all $j > j_*.$ The lemma's statement follows.
\end{proof}
The above lemma allows us to make the following definition.

\begin{definition} \label{defn-j}
    Define $j_{K,\chi}(r)$ to be the least element $j_0$ of $\N \setminus S_m$ such that $j_0 \geq j_+$, 
    \begin{itemize}
        \item[(1)] for all $j \in \N \setminus S_m$ with $j > j_0,$ we have $\zeta_{K,j,m}(r) \geq 1 + N(\mathfrak q_j)^{-r},$ and  
        \item[(2)] for all $j > j_0$ with $\chi(N(\mathfrak p_j)) = -1,$ we have $$ \frac{1}{1-N(\mathfrak p_j)^{-r}} < 1 + N(\mathfrak q_j)^{-r}.$$
    \end{itemize}
\end{definition}
We note that such an element $j_{K,\chi}(r)$ exists. By Lemma \ref{finiteness-finitely-many-j}, sufficiently large $j$ satisfy condition (1). When $\chi(N(\mathfrak p_j)) = -1,$ we have $N(\mathfrak q_j) < N(\mathfrak p_j),$ and the inequality in the second bulleted condition is equivalent to $N(\mathfrak q_j)^r + 1 < N(\mathfrak p_j)^r,$ which holds for sufficiently large $j$.


For a prime ideal $\p,$ define $$\sigma_{r,K,\chi}(\p^\infty) = \lim_{i \rightarrow \infty} \sigma_{r,K,\chi}(\p^i) = \frac{1}{1 - \chi(N(\mathfrak p)) N(\mathfrak p)^{-r}}.$$

\begin{lemma} [{\cite[Lemma 2.3]{S17}}] \label{finiteness-first-interval}
    For $r>1,$ we have $$\overline{\sigma_{r,K,\chi}( \mathcal{N}_{j_{K,\chi}(r)})} = \left[ \prod_{k > j_{K,\chi}(r); \chi(N(\p_k)) = -1} 1 - N(\p_k)^{-r}, \prod_{k > j_{K,\chi}(r); \chi(N(\p_k)) = 1} \frac{1}{1-N(\p_k)^{-r}}\right].$$
\end{lemma}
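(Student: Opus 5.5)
The plan is to prove the two inclusions separately, following Sanna's greedy argument in \cite[Lemma 2.3]{S17}. Write $j_0 = j_{K,\chi}(r)$, and let $A$ and $B$ denote the left and right endpoints of the claimed interval.

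\emph{Inclusion $\subseteq$.} Since $\sigma_{r,K,\chi}$ is multiplicative, it suffices to bound each local factor $\sigma_{r,K,\chi}(\p_k^e)$ for $k>j_0$ and $e \geq 0$.
\begin{itemize}
\item If $k\in S_m$, every proper power has $\chi(N(\p_k^i))=0$, so the factor equals $1$.
\item If $\chi(N(\p_k))=1$, the factor lies in $[1,(1-N(\p_k)^{-r})^{-1})$.
\item If $\chi(N(\p_k))=-1$, the factor is the alternating sum $\sum_{i=0}^e(-q)^i$ with $q=N(\p_k)^{-r}<1$. Its value lies in $[1-q,1]$, the minimum being attained at $e=1$.
\end{itemize}
Taking products over $k>j_0$ places every value in $[A,B]$, and $[A,B]$ is closed, so the closure lies there too.

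\emph{Inclusion $\supseteq$, Step 1: the range $[1,B]$.} I would first show that every $z\in[1,B]$ lies in the closure of the values on ideals supported on primes $\p_k$ with $k>j_0$ and $\chi(N(\p_k))=1$. Run the greedy algorithm over these primes in increasing index. At $\p_j$, with current product $P$, choose the largest exponent $e\in\{0,1,\dots\}\cup\{\infty\}$ such that $P\,\sigma_{r,K,\chi}(\p_j^e)\le z$. The partial products are nondecreasing and bounded by $z$, so they converge to some $L\le z$.

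Suppose $L<z$. If from some point on every exponent is $\infty$, then $L$ equals $P_{j}\,\zeta_{K,j,m}(r)$ restricted to $\chi=1$ primes. In particular the full product $B$ would be reached whenever $z=B$; in general the first stage where the greedy choice is finite gives a contradiction. So the key case is when infinitely many exponents are finite. At such a stage $j$ we have $P_j\,\sigma(\p_j^{e+1})>z$, so $z/P_j$ is within a factor $1+N(\p_j)^{-r(e+1)}$ of the current value. Condition (1) of Definition \ref{defn-j} then says the tail $\zeta_{K,j,m}(r)$ is at least $1+N(\q_j)^{-r}$ for the relevant later stage. Hence the remaining gap can always be filled by later primes, so the gap $z/P$ shrinks to $1$. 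Checking this carefully, including that the primes in $S_m$ and those with $\chi=-1$ contribute factor $1$ to $\zeta_{K,j,m}$ only through the $\chi=1$ ones, is the main obstacle. Here I would mirror Sanna's proof and replace ``next prime'' by $\q_j$ and $\QQ_j$.

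\emph{Step 2: the range $[A,1)$, and the main obstacle.} Let $x\in[A,1)$. Let $J$ be the least index $>j_0$ such that
\[
y:=\prod_{j_0<k\le J,\ \chi(N(\p_k))=-1}\bigl(1-N(\p_k)^{-r}\bigr)\le x,
\]
with $J=\infty$ handling $x=A$ by a limit. By minimality of $J$,
\[
1\le \frac{x}{y}<\frac{1}{1-N(\p_J)^{-r}}<1+N(\q_J)^{-r}
\]
by condition (2). I then need $x/y$ to be reachable by $\chi=1$ primes of index greater than $J$. The space available is at least $\zeta$ over $\chi=1$ primes beyond $J$, and condition (1) supplies $\zeta_{K,J,m}(r)\ge 1+N(\q_J)^{-r}$. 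The delicate point is that $\zeta_{K,J,m}$ also contains $\chi=-1$ factors. So I would run the greedy algorithm of Step 1 on the primes after $J$ directly, treating $\chi=-1$ primes with exponent $0$. I would verify that the inequality needed at each greedy stage is exactly condition (1) applied to the $\chi=1$ subproduct. If that fails, I would instead use exponent $\infty$ on suitable $\chi=-1$ primes to absorb the discrepancy, again controlled by condition (2).

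Finally, I would multiply the ideals from the two stages; they are coprime since their supports are disjoint. This shows that $x$ is a limit of values on $\mathcal N_{j_0}$, which gives $\supseteq$.
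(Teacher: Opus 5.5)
Your inclusion $\subseteq$ is fine. The reverse inclusion has a genuine gap, already in Step 1: the claim that every $z\in[1,B]$ is a limit of values on ideals supported on the $\chi=1$ primes $\p_k$, $k>j_0$, is false in general. The claim requires, at every $\chi=1$ prime $\p_j$ with $j>j_0$, the bound
\[
T_j:=\prod_{k>j,\ \chi(N(\p_k))=1}\frac{1}{1-N(\p_k)^{-r}}\ \ge\ 1+N(\p_j)^{-r}.
\]
This bound is also necessary, because such a value is either $\ge 1+N(\p_j)^{-r}$ (the ideal is divisible by a $\chi=1$ prime of index in $(j_0,j]$) or $\le T_j$.

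Condition (1) of Definition \ref{defn-j} does not give this bound. The product $\zeta_{K,j,m}(r)$ runs over all $k>j$ with $k\notin S_m$, so the $\chi=-1$ primes contribute factors $(1-N(\p_k)^{-r})^{-1}>1$ to it, not factors $1$ as your parenthetical suggests, and they may supply much of the tail. For instance, take $K=\Q$, $\chi$ the nontrivial character mod $4$ and $r=9/2$. One checks numerically that (1) and (2) hold at every index beyond that of $37$; at $47$, where $N(\q_j)=41$, the relevant quantity $\sum_{p>47}(41/p)^{9/2}$ is about $1.2$. So $j_{\Q,\chi}(r)$ precedes $41$. Yet $\sum_{p>41,\ p\equiv 1 \bmod 4}(41/p)^{9/2}\approx 0.67$, so $T_j<1+41^{-9/2}$ at the prime $41$, and your Step 1 set misses an interval just below $1+41^{-9/2}$.

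Step 2 breaks the same way. After fixing $y$, every value coming from $\mathcal N_J$ is at most the $\chi=1$-only product past $J$, and this can be smaller than $x/y$. In the example, with $J$ the index of $43$, this product is about $1+0.67\cdot 41^{-r}$, while $x/y$ can be as large as about $1+0.81\cdot 41^{-r}$. Your fallback cannot repair this. Raising a $\chi=-1$ exponent from $0$ to $\infty$ lowers the value, and raising it from $1$ to $\infty$ raises the value only by the second-order factor $(1-N(\p)^{-2r})^{-1}$.

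The missing idea, which is how the paper argues, is a single greedy pass in which both kinds of primes are upward steps by at most $(1-N(\p_j)^{-r})^{-1}$. Then the room left after stage $j$ is exactly $\zeta_{K,j,m}(r)$. Start at the left endpoint $A$, i.e.\ with every $\chi=-1$ prime present to the first power, and for $j=j_0+1,j_0+2,\dots$ proceed as follows. If $\chi(N(\p_j))=1$, take the largest exponent $a_j\in\Z_{\ge 0}\cup\{\infty\}$ keeping the running value $x_j\le x$. If $\chi(N(\p_j))=-1$, delete $\p_j$ (multiply by $(1-N(\p_j)^{-r})^{-1}$) when this keeps $x_j\le x$.

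The limit $\ell$ satisfies $\ell\le x$ by construction. If infinitely many choices are non-maximal, then $\ell\ge x$. If every choice is maximal, then $\ell=B\ge x$. Otherwise, at the last non-maximal stage $j'>j_0$ you get $x<x_{j'}(1+N(\q_{j'})^{-r})$: this follows from $\sigma_{r,K,\chi}(\p^{a+1})/\sigma_{r,K,\chi}(\p^{a})\le 1+N(\p)^{-r}$ when $\chi=1$, and from condition (2) when $\chi=-1$. Together with $x_{j'}\zeta_{K,j',m}(r)=\ell\le x$, this contradicts condition (1). This covers all of $[A,B]$ at once, with no split at $1$.
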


\begin{proof}
    Denote the interval above by $[c,d].$
    Take $x \in [c,d].$ We define a sequence $(x_j)_{j \geq j_{K,\chi}(r)}$ as follows. Let $x_{j_{K,\chi}(r)} = c.$ For any $j > j_{K,\chi}(r),$ we define $x_j$ using the following process.
    If $\chi(N(\p_j)) = 0,$ then set $x_j = x_{j-1}.$
    If $\chi(N(\p_j)) = 1,$ then let $a_j$ be the largest nonnegative integer $a$ (including $\infty$) such that $x_{j-1} \cdot \sigma_{r,K,\chi}(\p_j^a) \leq x,$ and let $$x_j = x_{j-1} \cdot \sigma_{r,K,\chi}(\p_j^{a_j}).$$ If $\chi(N(\p_j)) = -1,$ then if $x_{j-1} \cdot \frac{1}{1-N(\p_j)^{-r}} \leq x$, let $b_j = 1$, and let $$x_j = x_{j-1} \cdot \frac{1}{1-N(\p_j)^{-r}}.$$ Otherwise, let $b_j=0$, and let $x_j=x_{j-1}.$ Since the sequence $(x_j)_{j\geq j_{K,\chi}(r)}$ is non-decreasing and bounded, it converges, and let its limit be $\ell.$ Note that $\ell \leq x$. 
    If there are infinitely many $j > j_{K,\chi}(r)$ with $\chi(N(\p_j)) = 1$ and $a_j < \infty$, then $x < x_{j-1} \cdot \frac{1}{1 - N(\p_j)^{-r}}$ for infinitely many $j > j_{K,\chi}(r),$ so $x \leq \ell$ and thus $x = \ell.$
    If there are infinitely many $j >j_{K,\chi}(r)$ with $\chi(N(\p_j)) = -1$ and $b_j=0$, then a similar argument gives $x =\ell.$ Otherwise, there must only be finitely many $j > j_{K,\chi}(r)$ for which $a_j < \infty$ and only finitely many $j > j_{K,\chi}(r)$ for which $b_j = 0.$ 
    Let $j'$ be the least integer greater than equal to $j_{K,\chi}(r)$ such that for each $j > j'$ 
    with $j \not\in S_m,$ the following two conditions hold: when $\chi(N(\p_j)) = 1,$ we have $a_j=\infty,$ and when   $\chi(N(\p_j)) = -1$, we have $b_j = 1.$ Then for any $j > j',$ we have $$x_j = x_{j'} \cdot \prod_{j' < k \leq j; k \not\in S_m} \frac{1}{1-N(\p_k)^{-r}}$$ and thus that $$\ell = x_{j'} \cdot \prod_{k > j'; k \not\in S_m} \frac{1}{1-N(\p_k)^{-r}}.$$ If $j' = j_{K,\chi}(r),$ then $$\ell = c \cdot \prod_{k > j_{K,\chi}(r); k \not\in S_m} \frac{1}{1-N(\p_k)^{-r}} = d$$ and $\ell \leq x \leq d,$ so $\ell = x.$
    Otherwise, if $j' > j_{K,\chi}(r),$ then by the minimality of $j'$, we have $j' \not\in S_m.$ Thus, either $\chi(N(\p_{j'})) = 1$ and $a_{j'} < \infty,$ or $\chi(N(\p_{j'})) = -1$ and $b_{j'} = 0.$ 
    If $a_{j'} <\infty,$ then \begin{align*}
        x_{j'} \cdot \prod_{k > j'; k \not\in S_m} \frac{1}{1-N(\p_k)^{-r}} = \ell \leq x &< x_{j'-1} \cdot \sigma_{r,K,\chi}(\p_{j'}^{a_{j'} + 1}) \\
        &\leq \frac{x_{j'}}{\sigma_{r,K,\chi}(\p_{j'}^{a_{j'}})} \cdot  \sigma_{r,K,\chi}(\p_{j'}^{a_{j'} + 1}) \\
        &\leq x_{j'}  (1 + N(\p_{j'})^{-r})\\
        &= x_{j'}(1 + N(\q_{j'})^{-r}),
    \end{align*} where the last equality holds because $\chi(N(\p_{j'})) = 1.$ This contradicts condition (1) in the definition of $j_{K,\chi}(r).$ Similarly, if $b_{j'} = 0,$ then \begin{align*}
        x_{j'} \cdot \prod_{k > j'; k \not\in S_m} \frac{1}{1-N(\p_k)^{-r}} = \ell \leq x &< x_{j'-1} \cdot \frac{1}{1-N(\p_{j'})^{-r}} \\
        &= x_{j'} \cdot \frac{1}{1-N(\p_{j'})^{-r}} \\
        &< x_{j'} (1 + N(\q_{j'})^{-r}),
    \end{align*} where the last inequality holds due to the condition (2) in the definition of $j_{K,\chi}(r).$ This contradicts condition (1) in the definition of $j_{K,\chi}(r) \leq j'.$ Hence, $j' =j_{K,\chi}(r),$ so $x = \ell.$
\end{proof}

\begin{lemma} [{\cite[Lemma 2.4]{S17}}] \label{finiteness-induct}
    For any $1 \leq j \leq j_{K,\chi}(r),$  $$\overline{\sigma_{r,K,\chi}(\mathcal{N}_{j-1})} = \bigcup_{a \in \Z_{\geq 0} \cup \{\infty\}} \sigma_{r,K,\chi}(\p_j^a) \cdot \overline{\sigma_{r,K,\chi}(\mathcal{N}_j)}.$$
\end{lemma}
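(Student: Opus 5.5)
The plan is to prove the two inclusions directly, using unique factorization of ideals together with continuity and a compactness argument. Every $I \in \mathcal{N}_{j-1}$ factors uniquely as $I = \p_j^a I'$ with $a \in \Z_{\geq 0}$ and $I' \in \mathcal{N}_j$. Since $\p_j \nmid I'$, the divisors of $I$ are exactly the products $\p_j^b J'$ with $0 \le b \le a$ and $J' \mid I'$. The norm is multiplicative and so is $\chi$, hence $N(J)^{-r}\chi(N(J))$ is multiplicative on ideals, and therefore $\sigma_{r,K,\chi}(I) = \sigma_{r,K,\chi}(\p_j^a)\,\sigma_{r,K,\chi}(I')$. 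It follows that
\[
\sigma_{r,K,\chi}(\mathcal{N}_{j-1}) = \bigcup_{a \in \Z_{\geq 0}} \sigma_{r,K,\chi}(\p_j^a)\cdot \sigma_{r,K,\chi}(\mathcal{N}_j).
\]
The task is then to pass to closures and to account for the extra term $a = \infty$.

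For the inclusion $\supseteq$, take $a$ finite first. Multiplication by the nonzero constant $\sigma_{r,K,\chi}(\p_j^a)$ is continuous, so it maps $\overline{\sigma_{r,K,\chi}(\mathcal{N}_j)}$ into the closure of $\sigma_{r,K,\chi}(\p_j^a)\sigma_{r,K,\chi}(\mathcal{N}_j)$, which lies in $\overline{\sigma_{r,K,\chi}(\mathcal{N}_{j-1})}$. This constant is nonzero because it is a partial geometric sum with ratio of absolute value less than $1$ and first term $1$. For $a=\infty$, take $y$ as a limit of values $\sigma(I'_n)$ with $I'_n \in \mathcal{N}_j$. Then $\sigma(\p_j^{n}I'_n) = \sigma(\p_j^n)\sigma(I'_n)$ converges to $\sigma(\p_j^\infty)\,y$, because both factors converge and the values are bounded. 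Here we use that $|\sigma_{r,K,\chi}(I)| \le \zeta_K(r) < \infty$ for $r>1$.

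For the inclusion $\subseteq$, let $x = \lim_n \sigma(\p_j^{a_n})\sigma(I'_n)$ with $I'_n \in \mathcal{N}_j$. After passing to a subsequence, either $a_n$ is eventually constant equal to some $a$, or $a_n \to \infty$. The values $\sigma(I'_n)$ lie in the bounded set $\{|z| \le \zeta_K(r)\}$, so a further subsequence gives $\sigma(I'_n) \to y \in \overline{\sigma(\mathcal{N}_j)}$. In the first case $x = \sigma(\p_j^a) y$. In the second case $\sigma(\p_j^{a_n}) \to \sigma(\p_j^\infty)$, and so $x = \sigma(\p_j^\infty) y$.

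The restriction $j \le j_{K,\chi}(r)$ plays no role in this argument; the identity holds for every $j \ge 1$. The only step needing some care is the boundedness and compactness used to extract convergent subsequences. That step is routine, since real $\chi$ makes everything real and the absolute values are bounded by the Dedekind zeta value $\zeta_K(r)$. If $\chi(N(\p_j)) = 0$, then all $\sigma(\p_j^a) = 1$, and the union trivially reduces to $\overline{\sigma(\mathcal{N}_j)}$, consistent with the formula.
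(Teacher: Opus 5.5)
Your proof is correct and follows the paper's route: factor off the exact power of $\p_j$, then split according to whether the exponents stay bounded (so some fixed $a$ recurs) or tend to infinity along a subsequence. The differences are minor: you get convergence of $\sigma_{r,K,\chi}(I'_n)$ from boundedness and a further subsequence, whereas the paper divides by the nonzero factors $\sigma_{r,K,\chi}(\p_j^{a})$; and you write out the inclusion $\supseteq$ that the paper calls clear.
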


\begin{proof}
    It is clear that the right-hand side is contained in the left-hand side. To see the reverse containment, take $x \in \overline{\sigma_{r,K,\chi}(\mathcal{N}_{j-1})}.$ Then there exists a sequence $\{I_i\}_{i \in \N} \subset \mathcal{N}_{j-1}$ such that $\lim_{i \rightarrow \infty} \sigma_{r,K,\chi}(I_i) = x.$ For each $i > 0,$ let $a_i$ be the largest power of $\p_j$ dividing $I_i.$ If $\{a_i\}_{i \in \N}$ is bounded, then there exists a nonnegative integer $a_0$ that is attained infinitely many times in this sequence; let $\{i_\ell\}_{\ell \in \N}$ be the corresponding subindices. Then $$\frac{x}{\sigma_{r,K,\chi}(\p_j^{a_0})} = \lim_{\ell \rightarrow \infty} \sigma_{r,K,\chi}\left( I_{i_\ell} \p_j^{-a_0}\right) \in \overline{\sigma_{r,K,\chi}(\mathcal{N}_j)}.$$ If $\{a_i\}_{i \in \N}$ is unbounded, there exists a subsequence $\{a_{i_\ell}\}_{\ell \in \N}$ that converges to $\infty.$ Then $$\frac{x}{\sigma_{r,K,\chi}(\p_j^\infty)} = \lim_{\ell \rightarrow \infty} \sigma_{r,K,\chi}(I_{i_\ell} \p_j^{-a_{i_\ell}}) \in \overline{\sigma_{r,K,\chi}(\mathcal{N}_j)}.$$
\end{proof}

\begin{lemma} [{\cite[Lemma 2.5]{S17}}] \label{finiteness-last}
    Let $j > 0,$ and let $0 < \alpha < \beta.$  The set $$\bigcup_{a \in \Z_{\geq 0} \cup \{\infty\}} \sigma_{r,K,\chi}(\p_j^a) \cdot [\alpha,\beta]$$ is a finite 
    union of closed bounded intervals. In particular, if $\chi(N(\mathfrak p_j)) = 1$ and  $a_0$ is the least nonnegative integer such that $$\frac{\sigma_{r,K,\chi}(\p_j^{a_0+1})}{\sigma_{r,K,\chi}(\p_j^{a_0})} \leq \frac{\beta}{\alpha},$$ then the set above is the finite union of precisely $a_0+1$ pairwise disjoint closed bounded intervals. 
\end{lemma}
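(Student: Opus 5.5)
The plan is to split according to the value $\chi(N(\p_j)) \in \{-1,0,1\}$ and to write $s_a := \sigma_{r,K,\chi}(\p_j^a)$ for $a \in \Z_{\geq 0}\cup\{\infty\}$, with $q := N(\p_j)^{r} > 1$. The divisors of $\p_j^a$ are $\p_j^k$ for $0 \leq k \leq a$, and $\chi(N(\p_j^k)) = \chi(N(\p_j))^k$. Hence
\[
s_a = \sum_{k=0}^{a} \chi(N(\p_j))^k q^{-k}, \qquad s_\infty = \frac{1}{1-\chi(N(\p_j))q^{-1}} = \lim_{a\to\infty} s_a .
\]
The set in question is $\bigcup_a [s_a\alpha, s_a\beta]$. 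In every case the key observation is that the $s_a$ accumulate only at $s_\infty$, and the interval for $a=\infty$ has nonempty interior around $s_\infty$. So all but finitely many of the intervals merge into a single closed interval, and the remaining finitely many are closed intervals.

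\emph{Case $\chi(N(\p_j))=0$.} Here $s_a = 1$ for all $a$, since only $k=0$ contributes. The set is just $[\alpha,\beta]$.

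\emph{Case $\chi(N(\p_j))=-1$.} Here $s_a = \sum_{k\le a}(-q^{-1})^k$ is positive and converges to $s_\infty>0$. Since $\alpha/\beta<1<\beta/\alpha$, there is $A$ such that $\alpha/\beta < s_a/s_\infty < \beta/\alpha$ for all $a \geq A$. For such $a$ we get $s_a\alpha < s_\infty\beta$ and $s_\infty\alpha < s_a\beta$, so $[s_a\alpha,s_a\beta]$ meets $[s_\infty\alpha,s_\infty\beta]$. Therefore the union over $a\ge A$ together with $a=\infty$ is a union of closed intervals each meeting a fixed closed interval. This union is an interval $[\inf_{a} s_a\alpha,\ \sup_{a} s_a\beta]$. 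It is closed, because the infimum and supremum over $a\geq A$ are attained: the sequence takes finitely many values outside any neighbourhood of $s_\infty$, and $s_\infty$ itself is included. Adding the finitely many intervals with $a<A$ gives a finite union of closed bounded intervals.

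\emph{Case $\chi(N(\p_j))=1$.} Now $s_a = \sum_{k\le a}q^{-k}$ is strictly increasing to $s_\infty$. I would compute
\[
\frac{s_{a+1}}{s_a} = 1 + \frac{q^{-(a+1)}}{s_a},
\]
which is strictly decreasing in $a$ and tends to $1 < \beta/\alpha$. Hence $a_0$ exists, and the ratio is $\le\beta/\alpha$ exactly for $a \geq a_0$. Consecutive intervals $[s_a\alpha,s_a\beta]$ and $[s_{a+1}\alpha,s_{a+1}\beta]$ intersect iff $s_{a+1}\alpha\le s_a\beta$, that is, iff $a\ge a_0$.

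For $a\geq a_0$ the intervals chain together into $[s_{a_0}\alpha,\ s_\infty\beta)$. Adjoining $[s_\infty\alpha,s_\infty\beta]$, which overlaps this chain because $s_a\beta>s_\infty\alpha$ for large $a$, closes it up to $[s_{a_0}\alpha, s_\infty\beta]$.

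For $0\le a<a_0$ we have $s_{a+1}\alpha > s_a\beta$. Together with monotonicity of $s_a$, this shows the intervals for $a=0,\dots,a_0-1$ are pairwise disjoint and each lies strictly to the left of the next one and of the merged block. This yields exactly $a_0+1$ pairwise disjoint closed intervals.

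The main point requiring care is the monotonicity of the ratio $s_{a+1}/s_a$. It follows since $q^{-(a+1)}$ decreases and $s_a$ increases. This monotonicity is what makes the threshold $a_0$ sharp and gives the exact count.
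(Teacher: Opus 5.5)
Your proof is correct and follows essentially the paper's approach. You split on $\chi(N(\p_j))\in\{-1,0,1\}$, use $\sigma_{r,K,\chi}(\p_j^a)\to\sigma_{r,K,\chi}(\p_j^\infty)$ to merge all but finitely many intervals, and take the threshold to be $a_0$ when $\chi(N(\p_j))=1$; beyond the paper, you verify two points it leaves implicit: that $s_{a+1}/s_a$ is decreasing (so consecutive intervals overlap exactly when $a\ge a_0$), and that the merged tail is closed because it includes the $a=\infty$ interval.
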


\begin{proof}
    We consider the three possible cases of the value of $\chi(N(\mathfrak p_j)).$ If $\chi(N(\mathfrak p_j)) = 0,$ then the desired union is simply $[\alpha,\beta].$ Next, suppose $\chi(N(\mathfrak p_j)) \neq 0.$ Because $\sigma_{r,K,\chi}(\mathfrak p_j^a)$ converges to $\sigma_{r,K,\chi}(\mathfrak p_j^\infty)$ as $a$ tends to $\infty,$ there exists $A$ for which $a \geq A$ implies the intervals $\sigma_{r,K,\chi}(\mathfrak p_j^a)[\alpha,\beta]$ and $\sigma_{r,K,\chi}(\mathfrak p_j^{a+1})[\alpha,\beta]$ intersect. The total union can thus be express as the union of at most $A+1$ pairwise disjoint closed bounded intervals. In particular, when $\chi(N(\mathfrak p_j)) = 1,$ if we take $A$ to be $a_0,$ then the total union consists of precisely $a_0+1$ pairwise disjoint closed bounded intervals. 
\end{proof}

We are ready to prove Theorem \ref{finiteness}.

\begin{proof}[Proof of Theorem \ref{finiteness}]
    By Lemma \ref{finiteness-first-interval}, $\overline{\sigma_{r,K,\chi}(\mathcal{N}_{j_{K,\chi}(r)})}$ is a single bounded closed interval. We use downward induction to show that for each $1 \leq j \leq j_{K,\chi}(r),$ the closure $\overline{\sigma_{r,K,\chi}(\mathcal{N}_{j-1})}$ is a finite union of pairwise disjoint closed bounded intervals. By Lemmas \ref{finiteness-induct} and \ref{finiteness-last}, the claim holds for $j = j_{K,\chi}(r).$ Suppose the claim holds for some $j = j'$; that is, $\overline{\sigma_{r,K,\chi}(\mathcal{N}_{j'})} = \bigsqcup_{t \in T} C_t$, where $T$ is finite and each $C_t$ is a bounded closed interval. By Lemma \ref{finiteness-induct}, we have $$\overline{\sigma_{r,K,\chi}(\mathcal{N}_{j'-1})} = \bigcup_{t \in T} \left( \bigcup_{a \in \Z_{\geq 0} \cup \{\infty\}} \sigma_{r,K,\chi}(\p_{j'}^a) \cdot C_t \right).$$ For each $t \in T,$ the inner union is a finite union of closed bounded intervals by Lemma \ref{finiteness-last}, so $\overline{\sigma_{r,K,\chi}(\mathcal{N}_{j'-1})}$ is the finite union of closed bounded intervals. Since $\mathcal{N}_0 = I_K,$ the theorem is proved.
\end{proof}

\section{Range of $C_{r,\Q,\chi}$ when $r$ is fixed} \label{section-range}

In this section, we prove some results about the values of $C_{r,\Q,\chi}$ for $r > 1$ and a Dirichlet character $\chi.$ In \cite[Theorem 3.3]{D16}, Defant shows that for $r \in \C$ with $\Re(r) > 1,$ we have $\lim_{r \rightarrow \infty} C_{r,\Q,\chi_0} = \infty.$ One can deduce from the theorem's proof that for a fixed character $\chi,$ we have $\lim_{r \rightarrow \infty} C_{r,\Q,\chi} = \infty.$ In this vein, we fix $r>1$ and determine what values $C_{r,\Q,\chi}$ can attain for arbitrary real character $\chi.$ 

In particular, we will prove Theorem \ref{rangeC1}, which asserts that for any $r > 1,$ there exists large enough $N \in \mathbb{N}$ for which $C_{r,\Q,\chi}$ can attain any integral value at least $N$. 
The key idea of the proof is that increasing the number and the magnitude of the distinct primes dividing a character's modulus forces additional jumps in the divisor function's range, thus generally increasing the number of connected components in the range. Thus, the most natural collection of characters we can use to witness and control these effects in the range are principal characters, which are determined entirely by the set of distinct prime factors of their moduli.

In this section, we use notation in a similar fashion to that of the previous section. Fix $r > 1.$ Let $p_j$ be the $j$-th rational prime and $\mathcal{N}_j$ be the set of positive integers not divisible by the first $j$ primes. For any positive integer $m$, let $\chi_{m}$ be the principal modulus-$m$ character. Lastly, recalling Definition \ref{defn-j}, set $j_m:= j_{\Q,\chi_m}(r).$

We proceed to define some new notation to select the characters $\chi$ in order for $C_{r,\Q,\chi}$ to attain any sufficiently large integral value. We define $\ell := \max(2, j_1)$ and 
$$ i_0= \begin{cases}
    j_1 & \text{if }j_1 \geq 2 \\
    \min\bigl\{i\in \mathbb{Z}_{>0} : \zeta_{\Q, i}(r) < 1+3^{-r}\} & \text{else.}
\end{cases}$$
Note that $i_0 \geq \ell,$ so we finally define the sequence of positive integers $(m_i)_{i \geq i_0}$ by $$m_i := \frac{1}{p_\ell} \prod_{p \leq p_i} p.$$ The moduli $m_i$ are chosen so that $p_\ell$ is the unique prime that contributes Euler factors to the image of $\sigma_{r,\Q,\chi}$. In particular, the prime $p_\ell$ creates ``copies" of $[1, \zeta_{\Q,i}(r)]$ in $\overline{\sigma_{r,\Q,\chi}(\N)}$. Increasing $i$ shrinks the lengths of these interval copies and thus increases the number of connected components.

\begin{lemma} \label{rangeClemma}
    Let $i \geq i_0.$ Then $(C_{r,\Q,\chi_{m_i}})_{i \geq i_0}$ is a non-decreasing sequence that is not eventually constant. In particular, for any $i \geq i_0,$ we have $$C_{r,\Q,\chi_{m_i}} = \left \lceil \log_{p_\ell^{-r}} \frac{\zeta_{\Q,i}(r)-1}{\zeta_{\Q,i}(r) - p_\ell^{-r}} \right \rceil.$$
\end{lemma}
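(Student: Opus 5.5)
The plan is to compute $\overline{\sigma_{r,\Q,\chi_{m_i}}(\N)}$ explicitly as a union of scaled copies of one interval, and then count components using Lemma \ref{finiteness-last}. Fix $i \geq i_0$ and write $\chi = \chi_{m_i}$, $p = p_\ell$, $t = p^{-r}$ and $Z = \zeta_{\Q,i}(r)$. Since $i \geq i_0 \geq \ell$, the prime $p$ does not divide $m_i$, while every other prime $q \leq p_i$ does. Hence $\chi(q^k)=0$ for $k\ge1$ at such $q$. By multiplicativity, $\sigma_{r,\Q,\chi}(n) = \sigma_{r,\Q,\chi}(p^{a}) \cdot \sigma_{r,\Q,\chi}(n')$, where $p^a \parallel n$ and $n'$ is the part of $n$ supported on primes $> p_i$. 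On that part $\chi$ agrees with the trivial character.

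\textbf{Step 1: the tail is an interval.} I will show $\overline{\sigma_{r,\Q,\chi}(\mathcal{N}_i)}\cap\{\text{values from primes}>p_i\} = [1, Z]$. For this I apply the greedy argument of Lemma \ref{finiteness-first-interval} to the primes $p_j$ with $j>i$. For $j\ge j_+$ we have $\mathfrak q_j = p_j$, and there are no $\chi=-1$ primes. So the only input needed is condition (1): $\zeta_{\Q,j}(r) \geq 1 + p_j^{-r}$ for all $j > i$. This holds because $i \geq i_0 \geq j_1 = j_{\Q,\chi_1}(r)$ and $S_1=\emptyset$. Note that $i_0 \geq j_1$ in both branches of its definition, since $j_1 \ge 1$. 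The argument also shows the lower endpoint is $1$, because there are no primes with $\chi = -1$.

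\textbf{Step 2: reduce to the $p$-adic factor.} The $p$-adic factor is handled as in Lemma \ref{finiteness-induct}, by taking subsequences on the exponent of $p$, allowing the exponent $\infty$. Together with Step 1 this gives
\[
\overline{\sigma_{r,\Q,\chi}(\N)} = \bigcup_{a \in \Z_{\geq 0}\cup\{\infty\}} \sigma_{r,\Q,\chi}(p^a)\,[1,Z].
\]
Here $\sigma(p^a) = (1 - t^{a+1})/(1-t)$ is increasing in $a$. The consecutive ratios $\sigma(p^{a+1})/\sigma(p^a)$ decrease to $1$, so once two consecutive copies overlap, all later copies overlap as well. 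By Lemma \ref{finiteness-last} with $\beta/\alpha = Z$, the number of components is $a_0 + 1$, where $a_0$ is the least $a \ge 0$ with $\sigma(p^{a+1})/\sigma(p^a) \le Z$.

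\textbf{Step 3: compute $a_0$.} Put $x = t^{a+1}$. The condition becomes
\[
\frac{1 - tx}{1-x} \le Z \iff x(Z - t) \le Z - 1 \iff t^{a+1} \le \frac{Z-1}{Z-t}.
\]
Since $0<t<1$, this is equivalent to $a+1 \ge L := \log_{t}\frac{Z-1}{Z-t}$. We have $L>0$ because $0<\frac{Z-1}{Z-t}<1$. Therefore the least admissible value of $a+1$ is $\lceil L\rceil$, and $C_{r,\Q,\chi_{m_i}} = \lceil L \rceil$. The boundary case where $L$ is an integer is handled correctly, because the inequality is non-strict.

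\textbf{Step 4: monotonicity.} As $i$ increases, $Z = \zeta_{\Q,i}(r)$ strictly decreases to $1$. The map $Z \mapsto \frac{Z-1}{Z-t}$ has derivative $\frac{1-t}{(Z-t)^2} > 0$, so the ratio decreases to $0$. Because the base $t$ is less than $1$, $L$ therefore increases to $\infty$. Hence $\lceil L\rceil$ is non-decreasing and unbounded, so in particular it is not eventually constant.

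I expect Step 1 to be the main obstacle. One must check carefully that the hypotheses of Lemma \ref{finiteness-first-interval} transfer from $\chi_1$ to the tail of $\chi_{m_i}$. The index sets differ: $S_{m_i}$ versus $S_1$, and the definition of $\mathfrak q_j$ also differs. So the greedy argument should be rerun directly on the primes $> p_i$, rather than citing the lemma verbatim.
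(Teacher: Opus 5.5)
Your proof is correct and follows the paper's route. You reduce to $\bigcup_{a}\sigma_{r,\Q,\chi_{m_i}}(p_\ell^a)\,[1,\zeta_{\Q,i}(r)]$, count components with Lemma \ref{finiteness-last}, solve the ratio inequality to get the ceiling formula, and get monotonicity from $\zeta_{\Q,i}(r)\downarrow 1$. The only difference is your Step 1: the paper does not rerun the greedy argument, but checks directly that $j_{\Q,\chi_{m_i}}(r)=\ell$ (any $j>\ell$ outside $S_{m_i}$ has $j>i$, so $\zeta_{\Q,j,m_i}(r)=\zeta_{\Q,j}(r)\ge 1+p_j^{-r}$ and $\mathfrak q_j=p_j$), and then cites Lemma \ref{finiteness-first-interval} verbatim with $\mathcal N_\ell$, which uses exactly the input you identified.
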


\begin{proof}
    Let $i \geq i_0.$ We first show that $j_{m_i} = \ell.$ It is clear that $j_+ = \ell.$ Now, take any $j \in \mathbb N \setminus S_{m_i}$ for which $j > \ell.$ Because $m_i$ is divisible by each $p_k$ with $k \leq i$ except $p_\ell,$ this implies $j > i.$ Hence, $\zeta_{\Q, j , m_i}(r) = \zeta_{\Q,j}(r).$ For any $j  > i,$ we have $\zeta_{\Q,j}(r) \geq 1 + p_j^{-r}$ because $i \geq i_0.$ Thus, $\ell$ satisfies the defining property of $j_{m_i}.$ 

    Since $\sigma_{r,\Q,\chi_{m_i}} (p_j^a) = 1$ for $1 \leq j \leq i$ with $j \neq \ell,$ we apply Lemma \ref{finiteness-induct} for $1 \leq j \leq \ell$ and subsequently apply Lemma \ref{finiteness-first-interval}
    to obtain \begin{align*}
        \overline{\sigma_{r, \Q,\chi_{m_i}} (\N)} &= \overline{\sigma_{r, \Q,\chi_{m_i}} (\mathcal{N}_1)} \\
        &= \bigcup_{a \in \Z_{\geq 0} \cup\{\infty\}} \sigma_{r, \Q,\chi_{m_i}} (p_\ell^a) \cdot \overline{\sigma_{r, \Q,\chi_{m_i}} (\mathcal{N}_{\ell})} \\
        &= \bigcup_{a \in \Z_{\geq 0} \cup\{\infty\}} \sigma_{r, \Q,\chi_{m_i}} (p_\ell^a) \cdot [1, \zeta_{\Q, j_{m_i}, m_i}(r)].
    \end{align*} Because $j_{m_i} = \ell,$ we have $\zeta_{\Q, j_{m_i}, m_i}(r) = \zeta_{\Q, \ell, m_i}(r).$ As each prime $p_k$ with $\ell < k \leq i$ divides $m_i,$ we have $\zeta_{\Q,\ell,m_i}(r) = \zeta_{\Q,i}(r).$ Thus, we have $$\overline{\sigma_{r, \Q,\chi_{m_i}} (\N)} = \bigcup_{a \in \Z_{\geq 0} \cup \{\infty\}}  \sigma_{r, \Q,\chi_{m_i}} (p_\ell^a) \cdot [1, \zeta_{\Q, i}(r)].$$

    Now, let $a_i$ be the least nonnegative integer such that \begin{equation} \label{eq1}
        \frac{ \sigma_{r,\Q,\chi_{m_i}} (p_{\ell}^{a_i+1}) }{ \sigma_{r,\Q,\chi_{m_i}} (p_{\ell}^{a_i}) } \leq \zeta_{\Q,i}(r).
    \end{equation} By Lemma \ref{finiteness-last}, we have $C_{r,\Q,\chi_{m_i}} = a_i+1.$
    Note that the left hand side of the equation is $$\frac{ 1+p_{\ell}^{-r} + \cdots + p_{\ell}^{-(a_i+1)r} }{  1+p_{\ell}^{-r} + \cdots + p_{\ell}^{-a_i r}  } = \frac{1 - (p_{\ell}^{-r})^{a_i+2}}{1 - (p_{\ell}^{-r})^{a_i+1}}.$$ Algebraic manipulation shows that $$C_{r,\Q,\chi_{m_i}} = a_i +1 = \left\lceil \log_{p_\ell^{-r}} \frac{\zeta_{\Q,i}(r) - 1}{\zeta_{\Q,i}(r) - p_\ell^{-r}} -1 \right\rceil+1 = \left\lceil \log_{p_\ell^{-r}} \frac{\zeta_{\Q,i}(r) - 1}{\zeta_{\Q,i}(r) - p_\ell^{-r}} \right\rceil.$$ As $i$ increases, the expression inside the ceiling increases and approaches $\infty$ because $\zeta_{\Q,i}(r)$ approaches 1 from the right. Thus, $(C_{r,\Q,\chi_{m_i}})_{i \geq i_0}$ is non-decreasing and not eventually constant.
\end{proof}

We proceed to prove Theorem \ref{rangeC1}. 

\begin{proof}[Proof of Theorem \ref{rangeC1}]
    For each $i$, define $y_i = \zeta_{\Q,i}(r).$
    By Lemma \ref{rangeClemma}, it suffices to prove that $C_{r,\Q,\chi_{m_{i+1}}} - C_{r,\Q,\chi_{m_i}} \leq 1$ for any $ i \geq i_0.$  Recall from Lemma \ref{rangeClemma} that $C_{r,\Q,\chi_{m_i}} = a_i+1,$ where $a_i$ is the least nonnegative integer satisfying Equation (\ref{eq1}).
    We essentially show that the sequence $$\left(\frac{1 - (p_\ell^{-r})^{k+2}}{1 - (p_\ell^{-r})^{k+1}} \right)_{k \geq 0}$$ converges to 1 at a faster rate than $(\zeta_{\Q,i}(r))_{i \geq i_0}$ does. 
    In particular, we prove that for any $k \geq0$ and $i \geq i_0,$ we have $$\frac{\frac{1 - (p_\ell^{-r})^{k+3}}{1 - (p_\ell^{-r})^{k+2}}  - 1}{\frac{1 - (p_\ell^{-r})^{k+2}}{1 - (p_\ell^{-r})^{k+1}}  - 1} < \frac{\zeta_{\Q,i+1}(r) - 1}{\zeta_{\Q,i}(r)- 1}.$$ 
    This proves the desired, because if $C_{r,\Q, \chi_{m_{i+1}}} - C_{r,\Q, \chi_{m_i}} > 1$ for some $i \geq i_0,$ then $a_{i+1} \geq a_i+ 2.$ This would imply that there exists $k \geq 0$ such that $$\frac{1 - (p_\ell^{-r})^{k+2}}{1 - (p_\ell^{-r})^{k+1}} \leq y_{i}\text{ and } \frac{1 - (p_\ell^{-r})^{k+3}}{1 - (p_\ell^{-r})^{k+2}} > y_{i+1},$$ which would contradict the claim. 
    
    For any $k \geq 0,$ we find that \begin{align*}
        \frac{ \frac{1 - (p_\ell^{-r})^{k+3}}{1 - (p_\ell^{-r})^{k+2}} - 1}{\frac{1 - (p_\ell^{-r})^{k+2}}{1 - (p_\ell^{-r})^{k+1}} - 1} 
        &= p_\ell^{-r} \cdot \frac{1 - (p_\ell^{-r})^{k+1} }{1 - (p_\ell^{-r})^{k+2}} \\
        &< 3^{-r}
    \end{align*} because $\ell \geq 2.$
    On the other hand, for any $i \geq i_0,$ we find that 
    \begin{align*}
        \frac{\zeta_{\Q,i+1}(r) - \zeta_{\Q,i+2}(r)}{\zeta_{\Q,i}(r) - \zeta_{\Q,i+1}(r)} &= \frac{ \left( \frac{1}{1-p_{i+2}^{-r}} -  1 \right) \zeta_{\Q,i+2}(r) }{  \left( \frac{1}{1-p_{i+1}^{-r}} -  1 \right) \zeta_{\Q,i+1}(r)  }\\
        &= \frac{p_{i+2}^{-r} (1 - p_{i+1}^{-r})}{p_{i+1}^{-r}} \\
        &> 2^{-r}(1 - 3^{-r}),
    \end{align*}
    where the last step follows from Bertrand's postulate and the fact that $p_{i+1} \geq p_2 = 3$. Inductively, we conclude that for any $k \geq 0,$ we have $$\zeta_{\Q,i+k}(r) - \zeta_{\Q,i+k+1}(r) > (2^{-r} (1 - 3^{-r}) )^k (\zeta_{\Q,i}(r) - \zeta_{\Q,i+1}(r)) > (3^{-r} )^k (\zeta_{\Q,i}(r) - \zeta_{\Q,i+1}(r)).$$ As a result, \begin{align*}
        \zeta_{\Q,i+1}(r) - 1 &= \sum_{k=1}^\infty \zeta_{\Q,i+k}(r) - \zeta_{\Q,i+k+1}(r) \\&> (3^{-r} + (3^{-r})^2 + \cdots ) (\zeta_{\Q,i}(r) - \zeta_{\Q,i+1}(r)) \\ &= \frac{1}{3^r - 1} (\zeta_{\Q,i}(r) - \zeta_{\Q,i+1}(r)),
    \end{align*}so we find that $$\frac{\zeta_{\Q,i+1}(r) - 1}{\zeta_{\Q,i}(r)- 1} > \frac{ \frac{1}{3^r - 1} }{ \frac{1}{3^r - 1} + 1} = 3^{-r}.$$ 
    Then setting $N$ in the theorem statement to be $C_{r,\Q,\chi_{m_{i_0}}}$ gives the theorem.
\end{proof}

We proceed to prove Theorem \ref{rangeC2}, where we show that when $r$ is large enough, we can describe $C_{r,\Q,\chi}$ in greater detail.

\begin{proof}[Proof of Theorem \ref{rangeC2}]
    We first show the existence of a real number $r_0 > 1$ such that $$\sum_{n \geq 3} n^{-r} < 2^{-r} \text{ and } \sum_{n\geq 4} n^{-r} < 3^{-r}$$ hold for any $r > r_0.$ Indeed, we can bound $$\sum_{n \geq 3} n^{-r} \leq 3^{-r} + \int_3^\infty x^{-r} \; dx = 3^{-r} \left(1 + \frac{3}{r-1} \right),$$ which is less than $2^{-r}$ for sufficiently large $r$. By the same reasoning, choosing $r$ to be large enough ensures the second inequality. 

    Now, let $r > r_0.$ Then $$\zeta(r) = 1 + 2^{-r} + \sum_{n \geq 3} n^{-r} < 1 + 2 \cdot 2^{-r} < \frac{1+2^{-r}}{1-2^{-r}}$$ and hence $\zeta_{\Q,1}(r) = \zeta(r)(1-2^{-r}) < 1 + 2^{-r}.$ Similarly, we have $$\zeta(r) = 1 + 2^{-r} + 3^{-r} + \sum_{n \geq 4} n^{-r} < 1 + 2^{-r} + 2 \cdot 3^{-r} < \frac{1+3^{-r}}{(1-2^{-r})(1-3^{-r})}$$ and hence $\zeta_{\Q,2}(r) = \zeta(r)(1-2^{-r})(1-3^{-r}) < 1+3^{-r}.$ Consequently, $j_1 \geq 2,$  so $\ell = \max(2,j_1) = j_1.$


    Now, we show that $C_{r,\Q,\chi_{m_{j_1}}} = 2$.  By Lemma \ref{rangeClemma}, we wish to show that $$1 < \log_{p_{j_1}^{-r}} \frac{\zeta_{\Q,j_1}(r)-1}{\zeta_{\Q,j_1}(r) - p_{j_1}^{-r}} < 2.$$ This is equivalent to showing that $\zeta_{\Q,j_1}(r) < 1 + p_{j_1}^{-r}$ and $$\zeta_{\Q,j_1}(r) > \frac{(p_{j_1}^{-r})^2 + p_{j_1}^{-r} + 1}{p_{j_1}^{-r} + 1}.$$ The first inequality follows from the minimality of $j_1$. We proceed to check the second inequality. Let $q = p_{j_1}$ and let $q_* = p_{j_1+1}$. By the definition of $j_1,$ we have $$\prod_{p > q} \frac{1}{1-p^{-r}} > \frac{1 + q_*^{-r}}{1 - q_*^{-r}}.$$ The right hand side of the inequality is a decreasing function in $q_*$, and Bertrand's Postulate tells us that $q_* < 2q,$ so the desired follows from showing $$\frac{1+q^{-r} + (q^{-r})^2}{1 + q^{-r}} < \frac{1 + (2q)^{-r}}{1 - (2q)^{-r}}.$$ Let $x = q^{-r}.$ The inequality above equates to $x^2 + (2-2^r)x + 2 > 0,$ which we need only prove holds for $x \leq 2^{-r}.$ If $1 < r < 2,$ then $2 - 2^r > -2,$ so $x^2 + (2-2^r)x + 2 > x^2 - 2x+ 2 = (x-1)^2 + 1 > 0.$ Now, suppose $r \geq 2.$ Note that $x^2 + (2-2^r)x + 2$ is decreasing on $x \in (0,2^{-r}]$ because its derivative is $2x + 2 - 2^r < 0.$ Therefore, $x^2 + (2-2^r)x + 2 \geq (2^{-r})^2 + (2-2^r)(2^{-r}) + 2 > 0,$ so the inequality holds. 

    Using Lemma \ref{rangeClemma}, we have now shown that for any $r > r_0,$ any integer greater than 1 occurs as $C_{r,\Q,\chi}$ for some character $\chi$. To conclude, we note that there exists a character $\chi$ such that $C_{r,\Q,\chi} = 1.$ Let $$m = \prod_{k \leq j_1} p_k.$$ Because $\chi_m(p_k)=0$ for $k\leq j_1,$ we have $\overline{\sigma_{r,\Q,\chi_m} ( \N)} = \overline{\sigma_{r,\Q,\chi_0}(\mathcal N_{j_1})}.$ As $j_1 = j_{\Q,\chi_0}(r),$ we then apply Lemma \ref{finiteness-first-interval} to conclude $\overline{\sigma_{r,\Q,\chi_m} ( \N)}= [ 1, \zeta_{\Q,j_1}(r)]$ and thus $C_{r,\Q,\chi_m} =1.$
\end{proof}

\section{A lower bound for $C_{r,\Q,\chi}$} \label{section-lower-1}

In this section, we prove Theorem \ref{lowerbound1}, which gives a lower bound on $C_{r,\Q,\chi}$ for any $r \geq 3$ and any character $\chi$. This result improves upon \cite[Theorem 5.2]{AB19}, which gives a lower bound on $C_{r,\Q}$ for any $r > 1$. Inspired by the proof of \cite[Theorem 5.2]{AB19}, the key mechanism behind the proof of Theorem \ref{lowerbound1} is to partition the natural numbers based on prime factors bounded by functions of $r$ and the modulus of $\chi$, which then capture explicit gaps in the range of the associated divisor function. 
To begin, we prove the following useful lemma. 

\begin{lemma} \label{tail}
    Let $r > 1.$
    \begin{itemize}
        \item[(1)] For any integer $0 < d \leq r-1,$ we have $$d^{-r} > \sum_{n = d+1}^\infty n^{-r}.$$
        \item[(2)] For any odd integer $d \leq 2r-2,$ we have $$d^{-r} > \sum_{n \geq d+2; 2 \nmid n} n^{-r}.$$ 
    \end{itemize}
\end{lemma}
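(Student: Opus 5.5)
Both parts are elementary tail estimates, and I would prove them by comparing each sum with an integral of $x^{-r}$, choosing the integration window so that the constraint on $d$ comes out exactly.

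For part (1), note that $x^{-r}$ is decreasing on $(0,\infty)$. For each $n\ge d+1$ we have $n^{-r}<\int_{n-1}^{n}x^{-r}\,dx$, strictly, since the integrand exceeds $n^{-r}$ on $[n-1,n)$. Summing over $n\ge d+1$ gives
\[
\sum_{n=d+1}^\infty n^{-r} < \int_d^\infty x^{-r}\,dx = \frac{d^{1-r}}{r-1} = d^{-r}\cdot\frac{d}{r-1}.
\]
The hypothesis $0<d\le r-1$ makes $\frac{d}{r-1}\le 1$, and this yields the claim. The strictness of the integral comparison is what allows equality $d=r-1$.

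For part (2), I would use the same idea with step $2$. For odd $n\ge d+2$, we have $2n^{-r}<\int_{n-2}^{n}x^{-r}\,dx$, again strictly. Summing over odd $n\ge d+2$, the intervals $[n-2,n]$ tile $[d,\infty)$, so
\[
\sum_{n\ge d+2,\ 2\nmid n} n^{-r} < \frac12\int_d^\infty x^{-r}\,dx = d^{-r}\cdot\frac{d}{2(r-1)}.
\]
This is at most $d^{-r}$ exactly when $d\le 2r-2$, which is the hypothesis. I would implicitly take $d$ to be a positive odd integer, so that $d^{-r}$ makes sense; the lemma is only used in that setting.

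There is no real obstacle here. The only point needing care is to keep the inequality strict so that the boundary cases $d=r-1$ and $d=2r-2$ are covered. This comes from the strict monotonicity of $x^{-r}$, so the integral comparison is strict term by term, and the sums converge because $r>1$.
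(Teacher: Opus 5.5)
Your proof is correct and follows the paper's approach. For (2), the paper makes the same integral comparison, written as $\int_{(d-1)/2}^\infty (2x+1)^{-r}\,dx = \frac{d^{1-r}}{2(r-1)}$; this is your step-$2$ comparison after the substitution $x\mapsto 2x+1$. For (1), the paper only cites \cite[Lemma 5.1]{AB19}, and your unit-step integral comparison supplies that argument directly.
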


\begin{proof}
    (1) is proved in \cite[Lemma 5.1]{AB19}. (2) follows from noting that $$\sum_{\text{odd } n = d+2}^\infty n^{-r} < \int_{\frac{d-1}{2}}^\infty (2x+1)^{-r} dx = \frac{d^{-r+1}}{2(r-1)},$$ which is less than or equal to $d^{-r}$ when $d \leq 2r-2.$
\end{proof}
We are ready to prove Theorem \ref{lowerbound1}.

\begin{proof}[Proof of Theorem \ref{lowerbound1}]
Let $\theta$ be a nonzero complex number. Then any complex number $x \in \C$ can be written uniquely in the form $a \theta + b \theta i,$ where $a,b \in \R.$ Define $\Re_\theta$ by $\Re_\theta(a\theta + b \theta i) = a.$

We begin by partitioning $\N$. To do so, we first define the map $B_1: \N \rightarrow \mathcal{P}(\N)$ by $B_1(x) = \{d \in \N : d \mid x, d \leq r-1, \gcd(d,m)=1\}.$ Next, we define the map $B_2 : \N \rightarrow \mathcal {P} (\N)$ by $$B_2(n) = \begin{cases}
    \{d \in \N : d \mid n, r-1 \leq d \leq 2r-2, \gcd(d, 2m) = 1\} & \text{ if } 2 \not\in B_1(n) \\
    \emptyset &\text{ if }2 \in B_1(n).
\end{cases}$$
Finally, define $A(n) = (B_1(n), B_2(n)).$
Note that this determines a partition of $\N$ whose parts are $A^{-1}(S)$ for $S \in A(\N).$ We wish to show that for any distinct $S_1, S_2 \in A(\N),$ the sets $\sigma_{r,\Q,\chi}(A^{-1}(S_1))$ and $\sigma_{r,\Q,\chi}(A^{-1}(S_2))$ have disjoint closures. Write $S_1 = A(x)$ and $S_2 = A(y)$ for distinct $x,y \in \N.$ 

    We consider separately the cases $B_1(x) \neq B_1(y)$ and $B_1(x) = B_1(y)$. Assume first that $B_1(x) \neq B_1(y)$.  Suppose without loss of generality that the smallest integer $d_0 \leq r-1$ coprime to $m$ that divides precisely one of $x$ and $y$ lies in $B_1(x)$. Let $\theta = \chi(d_0).$ Take $x'$ such that $B_1(x') = B_1(x)$ and $y'$ such that $B_1(y') = B_1(y).$ For each $d < d_0,$ the difference of the terms for $d$ in $\sigma_{r,\Q,\chi} (x') - \sigma_{r, \Q,\chi}(y')$ is 0: if $\gcd(d,m) > 1,$ then $\chi(d) = 0,$ and otherwise, $d \in B_1(x')$ if and only if $d \in B_1(y')$ by the minimality of $d_0.$ Then we have $$\sigma_{r,\Q,\chi} (x') - \sigma_{r, \Q,\chi}(y') = \theta d_0^{-r} + \sum_{d \mid x'; d > d_0} \chi(d)  d^{-r} - \sum_{d \mid y'; d > d_0} \chi(d) d^{-r}.$$ Applying $\Re_\theta$ to both sides, we obtain $$\Re_\theta(\sigma_{r,\Q,\chi}(x')) - \Re_\theta(\sigma_{r,\Q,\chi}(y')) \geq d_0^{-r} - \sum_{d > d_0} d^{-r} > 0$$ by Lemma \ref{tail}. 
Thus, the closures of $\sigma_{r,\Q,\chi}(A^{-1}(A(x))$ and $\sigma_{r,\Q,\chi}(A^{-1}(A(y))$ are disjoint. 

    It remains to consider the case $B_1(x) = B_1(y).$ Then $B_2(x) \neq B_2(y),$ so one of these sets is nonempty, meaning $2 \not\in B_1(x) = B_1(y).$ If $m$ is even, then $\chi(d) = 0$ for even $d$. If $m$ is odd, because  $r-1 \geq 2$, the fact that $2 \not\in B_1(x) = B_1(y)$ implies that $x$ and $y$ have no even divisors.  The argument then proceeds in a similar fashion to the case $B_1(x) \neq B_1(y).$ Let $d_0$ be the smallest integer that is in precisely one of the two sets $B_2(x)$ and $B_2(y).$ Without loss of generality, we assume $d_0 \in B_2(x) \setminus B_2(y).$ Let $\theta = \chi(d_0),$ and take $x'$ such that $A(x') = A(x)$ and $y'$ such that $A(y') = A(y).$ When comparing $\Re_\theta(\sigma_{r,\Q,\chi}(x'))$ and $\Re_\theta(\sigma_{r,\Q,\chi}(y'))$, we use the fact that the contributions from divisors $d < d_0$ are the same. In addition, there are no contributions from even divisors. Indeed, if $m$ is even, then $\chi(d) = 0$ for even $d$; if $m$ is odd, because  $r-1 \geq 2$, the fact that $2 \not\in B_1(x) = B_1(y)$ implies that $x$ and $y$ have no even divisors.  More explicitly, we compute 
    \begin{align*}
        \Re_\theta(\sigma_{r,\Q,\chi}(x'))-\Re_\theta(\sigma_{r,\Q,\chi}(y')) &= \Re_\theta \left( \sum_{d \mid x'; d < d_0} d^{-r} \chi(d) + d_0^{-r} \theta + \sum_{d \mid x'; d \geq d_0+2; d \text{ odd}} d^{-r}\chi(d)\right) \\
        &\text{ \; } - \Re_\theta \left( \sum_{d \mid y'; d < d_0} d^{-r} \chi(d) +  \sum_{d \mid y'; d \geq d_0+2; d \text{ odd}} d^{-r}\chi(d)\right) \\
        &= d_0^{-r} + \Re_\theta\left( \sum_{d \mid x'; d \geq d_0+2; d \text{ odd}} d^{-r}\chi(d) - \sum_{d \mid y'; d \geq d_0+2; d \text{ odd}} d^{-r}\chi(d)\right) \\
        &\geq d_0^{-r} - \sum_{d \geq d_0 + 2 ; d \text{ odd}} d^{-r},
    \end{align*}
    which is positive by Lemma \ref{tail}. Thus, we have $C_{r, \Q, \chi} \geq |A(\mathbb N)|.$ 

    It remains to give a lower bound for $|A(\N)|.$ Let $P$ be the set of odd primes $p \leq 2r-2$ that are coprime to $m$. For each subset $S \subseteq P,$ define $$n_S = \prod_{p \in S} p .$$ Because $n_S$ is odd, $2 \not\in B_1(n_S),$ so $B_2(n_S) = \{d \in \N : d \mid n_S, r-1 \leq d \leq 2r-2, \gcd(d, 2m) = 1\}.$ Each prime $p \in S$ is recorded either in $B_1(n_S)$ (if $p \leq r-1$) or in $B_2(n_S)$ (if $r-1\leq p \leq 2r-2$). Thus, distinct choices of subsets $S$ of $P$ give rise to distinct values of $A(n_S)$. Consequently, we have $$C_{r, \Q, \chi} \geq |A(\mathbb N)| \geq 2^{|P|} = 2^{\pi_m(2r-2) - \delta_m},$$ where $\delta_m$ equals 0 if $m$ is even and equals 1 if $m$ is odd.
\end{proof}

\section{A lower bound for $C_{r,K}$} \label{section-lower-2}

In this section, we prove Theorem \ref{lowerbound2}, which gives a lower bound for $C_{r,K}$ for finite Galois extensions $K$ of $\Q$ and generalizes the lower bound that \cite[Theorem 4.1]{AB19} gives when $K = \Q$. The proof's key mechanism is inspired by that of the previous section and of \cite[Theorem 4.1]{AB19}. We partition the set of integral ideals in \(\mathcal O_K\)
based on the multiplicities of their divisor ideals whose norms are bounded by a
function \(h_K(r)\) depending on \(K\) and \(r\). This partition captures explicit
gaps in the range of the associated divisor function; thus, the problem reduces to
lower-bounding \(h_K(r)\) and counting how many distinct partition classes are
created.
The new feature when handling a number field \(K\), rather than just \(\mathbb Q\),
is that distinct ideals can have the same norm. Then in order to bound
\(h_K(r)\), we control the number of ideals of each fixed norm using the
coefficients of the Dedekind zeta function. Finally, we use Chebotarev's density
theorem to estimate the number of rational primes that split completely in \(K\), giving a 
lower bound for the number of gaps created.

 We begin by defining some notation that we use throughout this section. We fix a degree-$s$ Galois extension $K/ \Q. $ 
For $n \geq 1,$ we define $a_K(n)$ to be the number of integral ideals $I$ in $K$ such that $N(I) = n.$ Then the Dedekind zeta function can be written as $$\zeta_K(r) = \sum_{n=1}^\infty \frac{a_K(n)}{n^r}.$$ In addition, we define $b_K(n)$ be the number of prime ideals $\mathfrak p$ in $K$ such that $N(\mathfrak p) = n.$ Lastly, for any ideal $I \subseteq \mathcal O_K,$ we define $m_I(n)$ to be the number of integral ideals $J$ in $K$ for which $J \mid I$ and $N(J) = n$. Then the divisor function can be written as $$\sigma_{r,K}(I) = \sum_{n=1}^\infty \frac{m_I(n)}{n^r}.$$

We first prove a general ``theoretical" lower bound on $C_{r,K}$ that generalizes the partition method used in the previous section. 

\begin{definition}
    Fix $r > 1.$ We define $h_K(r)$ to be the largest integer $h \geq 1$ such that for each integer $2 \leq d \leq h,$ the inequality $$d^{-r} > \sum_{n > d} \frac{a_K(n)}{n^r}$$ holds. If no such $h \geq 2$ exists, then define $h_K(r) = 1.$
\end{definition}

\begin{theorem} \label{partition}
    For any $r > 1,$ we have $$C_{r,K} \geq \prod_{n=2}^{h_K(r)} (b_K(n) + 1).$$
\end{theorem}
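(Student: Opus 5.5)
The plan is to imitate the partition argument from the proof of Theorem \ref{lowerbound1}, using the counting functions $m_I(n)$ in place of divisibility by individual integers. Write $h = h_K(r)$ and define $A : I_K \to \Z_{\geq 0}^{h-1}$ by $A(I) = (m_I(2), m_I(3), \dots, m_I(h))$. If $h = 1$ the statement is trivial, so assume $h \geq 2$. The fibres $A^{-1}(v)$ partition $I_K$ into finitely many classes, since $0 \le m_I(n) \le a_K(n)$. The proof then has two parts: show that distinct classes have images with disjoint closures, and show that $|A(I_K)| \geq \prod_{n=2}^{h}(b_K(n)+1)$.

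\textbf{Separation.} Take $I, I'$ with $A(I) \neq A(I')$, and let $d \in [2,h]$ be the least index with $m_I(d) \neq m_{I'}(d)$; say $m_I(d) > m_{I'}(d)$. Note $m_I(1) = m_{I'}(1) = 1$. Then
\[
\sigma_{r,K}(I) - \sigma_{r,K}(I') = \frac{m_I(d)-m_{I'}(d)}{d^r} + \sum_{n>d}\frac{m_I(n)-m_{I'}(n)}{n^r} \geq d^{-r} - \sum_{n>d}\frac{a_K(n)}{n^r},
\]
using $0 \le m_J(n) \le a_K(n)$. By the definition of $h_K(r)$, this lower bound $\varepsilon_d$ is positive. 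It depends only on $d$, not on the particular ideals. Hence for classes $v \neq w$ in lexicographic order, every element of $\sigma_{r,K}(A^{-1}(v))$ exceeds every element of $\sigma_{r,K}(A^{-1}(w))$ by at least $\min_{2\le d\le h}\varepsilon_d > 0$ (taking $v$ to be the lexicographically larger one). So the closures of the images of different classes are disjoint.

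Consequently $\overline{\sigma_{r,K}(I_K)}$ is a finite disjoint union of nonempty closed sets, one for each value of $A$. Each connected component lies in exactly one of these sets, so $C_{r,K} \geq |A(I_K)|$.

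\textbf{Counting.} This is the step needing the most care, because $m_I(n)$ counts all divisors of norm $n$, not just prime ones. For each $2 \le n \le h$, choose an integer $c_n \in [0, b_K(n)]$ and a set $P_n$ of $c_n$ prime ideals of norm $n$. Put $I = \prod_n \prod_{\p \in P_n}\p$, which is squarefree. A divisor of $I$ of norm $n \le h$ is a product of distinct chosen primes whose norms multiply to $n$; all of these norms are at most $n$, so they lie in $[2,h]$. Therefore
\[
m_I(n) = c_n + F_n(c_2,\dots,c_{n-1}),
\]
where $F_n$ counts divisors with at least two prime factors. I will check that $F_n$ depends only on the counts $c_k$ with $k<n$: it is a sum, over factorizations of $n$ into norms, of products of binomial coefficients in the $c_k$.

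It follows by induction on $n$ that $A(I)$ determines $c_2, c_3, \dots, c_h$ in turn. Hence distinct vectors $(c_n)$ give distinct values of $A$, so $|A(I_K)| \ge \prod_{n=2}^h (b_K(n)+1)$. Combining this with the separation step proves the theorem.
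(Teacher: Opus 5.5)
Your proposal is correct and follows essentially the same route as the paper. You use the same map $A(I)=(m_I(2),\dots,m_I(h))$, the same least-differing-index estimate $d^{-r}-\sum_{n>d}a_K(n)n^{-r}>0$ to separate the classes, and the same squarefree products of prime ideals to count them. Your extra points only make explicit what the paper leaves implicit: that $A(I_K)$ is finite, and that $F_n$ depends only on the counts $c_k$ with $k<n$, whereas the paper fixes an ordering of the primes of each norm so that the lower-norm primes are literally identical.
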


\begin{proof}
    Let $h = h_K(r).$ Define the map $A : I_K \ra \Z_{\geq 0}^{h-1}$ by $$A(I) = (m_I(2), m_I(3), \ldots, m_I(h)).$$ Note that this determines a partition of $I_K$ whose parts are $A^{-1}(S)$ for $S \in A(I_K).$ We wish to show that for any distinct $S_1, S_2 \in A(I_K),$ the sets $\sigma_{r,K}(A^{-1}(S_1))$ and $\sigma_{r,K}(A^{-1}(S_2))$ have disjoint closures. Write $S_1 = A(\mathfrak a)$ and $S_2 = A(\mathfrak b)$ for distinct $\mathfrak a, \mathfrak b \in I_K.$ Let $d_0$ be the smallest integer $2 \leq d_0 \leq h$ such that $m_\fa(d_0) \neq m_\bb(d_0),$ and we assume without loss of generality that $m_\fa(d_0) > m_\bb(d_0).$ 

    Now, let $\fa'$ and $\bb'$ be ideals in $K$ such that $A(\fa') = A(\fa)$ and $A(\bb') = A(\bb).$ From the definition of $d_0,$ we have $m_{\fa'}(d_0) - m_{\bb'}(d_0) \geq 1.$ Thus, we observe 
    \begin{align*}
        \sigma_{r,K}(\fa') - \sigma_{r,K}(\bb') &= \sum_{n=1}^\infty \frac{m_{\fa'}(n) - m_{\bb'}(n)}{n^r} \\
        &\geq d_0^{-r} - \sum_{n = d_0+1}^\infty \frac{m_{\bb'}(n)}{n^r} \\
        &\geq d_0^{-r} - \sum_{n = d_0+1}^\infty \frac{a_K(n)}{n^r}.
    \end{align*} Because $d_0 \leq h_K(r),$ the definition of $h_K(r)$ implies that the last expression above is positive and thus that $\sigma_{r,K}(\fa') > \sigma_{r,K}(\bb').$

    Thus, we have shown that the closures of $\sigma_{r,K}(A^{-1}(S_1))$ and $\sigma_{r,K}(A^{-1}(S_2))$ are disjoint. It follows that $C_{r,K} \geq |A(I_K)|,$ so it remains to determine a lower bound for $|A(I_K)|.$ We do so in a fashion analogous to that of the previous section. For each $2 \leq n \leq h,$ denote the prime ideals of norm $n$ by $$\mathfrak{p}_{n, 1}, \ldots, \mathfrak{p}_{n, b_K(n)}.$$ For each tuple $(i_n)_{2 \leq n \leq h},$ where $0 \leq i_n \leq b_K(n),$ define the ideal $$I_{(i_n)} = \prod_{n=2}^h \prod_{j=1}^{i_n} \mathfrak{p}_{n, j}.$$ We check that distinct tuples $(i_n)$ give distinct values of $A(I_{(i_n)}).$ Let $\textbf{i} =(i_n)$ and $\textbf{j} = (j_n)$ be distinct tuples, and let $n_0$ be the smallest integer for which $i_{n_0} \neq j_{n_0}.$ We show $m_{I_\textbf{i}}(n_0) \neq m_{I_\textbf{j}}(n_0).$ The prime ideals dividing $I_\textbf{i}$ and $I_\textbf{j}$ that are of norm less than $n_0$ are identical. Thus, the ideals dividing $I_\textbf{i}$ and $I_\textbf{j}$ that are not prime and are of norm equal to $n_0$ are also identical. Hence, $m_{I_\textbf{i}}(n_0) - m_{I_\textbf{j}}(n_0)$ equals the difference in the number of prime ideals dividing $I_\textbf{i}$ and $I_\textbf{j}$ that have norm $n_0,$ which is $i_{n_0} - j_{n_0} \neq 0.$

    Therefore, $$C_{r,K} \geq \#\{(i_n)_{2 \leq n \leq h} : 0 \leq i_n \leq b_K(n)\} = \prod_{n=2}^h (b_K(n) + 1).$$\end{proof}

Now, we aim to bound $a_K(n)$ in terms of $n$.
To do this, we first define the function $$d_s(n) = \#\{(n_1, \ldots, n_s) \in \mathbb N^s : n_1 n_2 \cdots n_s = n\}$$ for any positive integer $s. $ The function $d_s(n)$ is clearly multiplicative. For any prime $p$ and nonnegative integer $e$, we see that $$d_s(p^e) = \# \{(e_1, \ldots, e_s) \in \Z_{\geq 0} : e_1 + \dots + e_s = e\} = \binom{e + s-1}{s-1}.$$ Therefore, we have the general formula $$d_s(n) = \prod_{p^{e_p} \|n} \binom{e_p+s-1}{s-1}.$$ 

\begin{lemma}
    Let $K/\Q$ be a degree-$s$ Galois field extension. For any $n \geq 1,$ we have $a_K(n) \leq d_s(n).$
\end{lemma}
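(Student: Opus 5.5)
Both $a_K$ and $d_s$ are multiplicative, so I will reduce to prime powers $n = p^e$. Multiplicativity of $a_K$ follows from unique factorization of ideals: an ideal of norm $n=\prod p^{e_p}$ factors uniquely as a product over $p$ of ideals supported on the primes above $p$, with the factor above $p$ having norm $p^{e_p}$. So it suffices to prove $a_K(p^e) \le \binom{e+s-1}{s-1}$ for every prime $p$ and every $e \ge 0$.

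Now fix $p$. Since $K/\Q$ is Galois, $p\mathcal O_K = (\p_1\cdots\p_g)^{e_0}$ with each $\p_i$ of common inertia degree $f$, and $e_0fg = s$; in particular $g \le s$. An ideal of norm $p^e$ has the form $\p_1^{c_1}\cdots\p_g^{c_g}$ with $f(c_1+\cdots+c_g) = e$. Hence $a_K(p^e)=0$ if $f\nmid e$, and otherwise
$$a_K(p^e) = \#\{(c_1,\ldots,c_g)\in\Z_{\ge0}^g : c_1+\cdots+c_g = e/f\} = \binom{e/f+g-1}{g-1}.$$

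It then remains to check the elementary inequality $\binom{e/f+g-1}{g-1} \le \binom{e+s-1}{s-1}$. This is the only real step, and it is mild. Both sides count nonnegative solutions of a linear equation, so I will use monotonicity. The count of compositions of $k$ into $t$ nonnegative parts is nondecreasing in $k$ (since $e/f \le e$) and nondecreasing in $t$ (pad with zero parts, and $g \le s$). Alternatively, one can give an explicit injection: send $(c_1,\ldots,c_g)$ to $(c_1,\ldots,c_{g-1}, c_g + e - e/f, 0,\ldots,0)\in\Z_{\ge0}^s$, whose entries sum to $e$.

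The Galois hypothesis is not essential here: in general $g\le s$ still holds and the varying inertia degrees $f_i\ge1$ only shrink the count. The Galois case is simply the cleanest to write.
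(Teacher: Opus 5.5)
Your proof is correct and follows the same route as the paper. Both arguments reduce to prime powers by multiplicativity and then bound the number of ideals of norm $p^e$ by $d_s(p^e)$ using only $f \ge 1$ and $g \le s$: the paper compares the coefficients of $p^{-ex}$ in the Euler factors $(1-p^{-fx})^{-g}$ and $(1-p^{-x})^{-s}$, while you compute that coefficient explicitly as $\binom{e/f+g-1}{g-1}$ and exhibit an injection, which is the same comparison made concrete.
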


\begin{proof}
    The Dedekind zeta function can be expressed as $$\zeta_K(x) = \prod_{\mathfrak p \subseteq \mathcal O_K} (1 - N(\mathfrak p)^{-x})^{-1}.$$ Because $a_K$ and $d_s$ are multiplicative, we fix a rational prime $p$ and prove $a_K(p^a) \leq d_s(p^a)$ for any positive integer $a$. Let $f_p$ be the inertia degree of a prime ideal in $K$ lying above $p$, and let $r_p$ be the number of distinct prime ideals in $K$ lying above $p$. Then $$\zeta_K(x) = \prod_p  (1 - p^{-f_p x})^{-r_p}.$$ 
    Note that $a_K(p^a)$ is precisely the coefficient of $p^{-ax}$ in $\zeta_K(x)$ and is thus the coefficient of $p^{-ax}$ in the $p$-Euler factor $(1 - p^{-f_p x})^{-r_p}.$ Because $f_p \geq 1$ and $r_p \leq s,$ the coefficient of $p^{-ax}$ in the expansion of $(1 - p^{-f_p x})^{-r_p}$ is less than or equal the coefficient of $p^{-ax}$ in the expansion of $(1 - p^{-x})^{-s}.$ The latter coefficient is precisely $d_s(p^a),$ as desired.
\end{proof}

Next, we bound $d_s(n)$ in terms of $n$. 

\begin{lemma}
    Fix a positive integer $s$. For any $\varepsilon > 0,$ there exists a constant $\eta_{s, \varepsilon} > 0$ such that $d_s(n) \leq \eta_{s, \varepsilon} n^\varepsilon$ for any $n \geq 1.$
\end{lemma}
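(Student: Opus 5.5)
The plan is the standard divisor-bound argument: use multiplicativity to reduce to prime powers, and exploit the fact that only finitely many primes can make $d_s(p^e) > p^{e\varepsilon}$. Since $d_s$ is multiplicative, we have
$$\frac{d_s(n)}{n^\varepsilon} = \prod_{p^{e_p} \| n} \frac{\binom{e_p+s-1}{s-1}}{p^{e_p\varepsilon}}.$$
It therefore suffices to show two things. First, all but finitely many of these local factors are at most $1$. Second, each of the finitely many remaining factors is bounded by a constant depending only on $s$, $\varepsilon$ and $p$, uniformly in $e_p$. Then $\eta_{s,\varepsilon}$ can be taken to be the product of these finitely many constants, and it is independent of $n$.

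For the first point, bound the binomial coefficient crudely by
$$\binom{e+s-1}{s-1} \le (e+1)^{s-1},$$
which holds because each factor $(e+k)/k \le e+1$ for $1 \le k \le s-1$. Suppose $p \ge 2^{(s-1)/\varepsilon}$. Then $p^{e\varepsilon} \ge 2^{e(s-1)} \ge (e+1)^{s-1}$ for every $e \ge 0$, using $2^e \ge e+1$. So every prime $p \ge 2^{(s-1)/\varepsilon}$ contributes a factor at most $1$.

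For the second point, take one of the finitely many primes $p < 2^{(s-1)/\varepsilon}$. The quantity $(e+1)^{s-1} p^{-e\varepsilon}$ tends to $0$ as $e \to \infty$, since polynomial growth is beaten by exponential decay. Hence it is bounded over $e \ge 0$ by some constant $c_p$. We may even write $c_p$ explicitly by maximizing $(x+1)^{s-1}2^{-x\varepsilon}$ over real $x \ge 0$, but existence suffices. Setting $\eta_{s,\varepsilon} = \prod_{p < 2^{(s-1)/\varepsilon}} \max(1, c_p)$ finishes the proof.

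No step here is a real obstacle. The only care needed is choosing the cutoff for $p$ so that the inequality $2^e \ge e+1$ absorbs the polynomial factor uniformly in $e$. The case $e = 0$ is trivial, since there the factor equals $1$.
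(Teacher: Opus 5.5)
Your proof is correct, but it takes a different route from the paper's.

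\textbf{Your route.} You split the local factors $\binom{e_p+s-1}{s-1}p^{-e_p\varepsilon}$ by the size of the prime. Every $p \ge 2^{(s-1)/\varepsilon}$ contributes at most $1$, since $(e+1)^{s-1} \le 2^{e(s-1)} \le p^{e\varepsilon}$. Each of the finitely many smaller primes contributes a factor bounded uniformly in $e$.

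\textbf{The paper's route.} The paper splits by the size of the exponent instead. For $e_p$ beyond a threshold $E$, it uses $\binom{e_p+s-1}{s-1} \le 2^{e_p\varepsilon/2} \le p^{e_p\varepsilon/2}$, so those primes together contribute at most $n^{\varepsilon/2}$. The primes with $1 \le e_p < E$ contribute at most $M^{\omega(n)}$. The paper controls this using the imported estimate $\omega(n) = O(\log n/\log\log n)$. That gives $d_s(n) \le n^{\varepsilon}$ only for sufficiently large $n$, and the finitely many remaining $n$ are absorbed into the constant.

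\textbf{Comparison.} Your argument is the classical Hardy--Wright one. It is fully self-contained, needing only $2^e \ge e+1$. It produces $\eta_{s,\varepsilon}$ explicitly and gives the inequality for all $n$ at once, with no separate treatment of small $n$. The paper's decomposition has one conceptual merit: it isolates the main source of large values of $d_s(n)$, namely many distinct prime factors with small exponents, as a subpolynomial factor $\exp(O(\log n/\log\log n))$. However, the lemma as stated never uses this extra information, and the paper pays for it by importing a bound on $\omega(n)$.
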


\begin{proof}
    Fix $\varepsilon > 0.$ Recall the closed formula $$d_s(n) = \prod_{p^{e_p} \|n} \binom{e_p+s-1}{s-1}.$$ Each binomial coefficient $\binom{e+s-1}{s-1}$ is bounded above by $e^{s-1}$ for large enough $e$. This polynomial growth is dominated eventually by exponential growth, so there exists $E$ such that if $e > E,$ then $$\binom{e+s-1}{s-1} \leq 2^{\frac{e \varepsilon}{2}}.$$ We then split the product in $d_s(n)$ using $E$; that is, we write $d_s(n) = d_{s,1}(n) d_{s,2}(n),$ where we define $$d_{s,1}(n) = \prod_{p^{e_p} \|n, 1 \leq e_p < E} \binom{e_p+s-1}{s-1} \text{ and }d_{s,2}(n) = \prod_{p^{e_p} \|n, e_p \geq E} \binom{e_p+s-1}{s-1}.$$ We can bound the second product as follows. $$d_{s,2}(n) \leq \prod_{p^{e_p} \| n} 2^\frac{e_p \varepsilon}{2} \leq \prod_{p^{e_p} \| n} p^\frac{e_p \varepsilon}{2} = n^\frac{\varepsilon}{2}$$ We proceed to bound the first product. Let $$M = \max_{1 \leq e < E} \binom{e+s-1}{s-1}.$$ Then $d_{s,1}(n) \leq M^{\omega(n)},$ where $\omega(n)$ is the number of distinct prime factors of $n$. Using the standard upper bound $\omega(n) = O(\frac{\log n}{\log \log n}),$ we find that 
    \begin{align*}
        d_{s,1}(n) & \leq \exp(\log M \cdot \omega(n)) \\
        &= \exp\left( O_{s, \varepsilon} \left( \frac{\log n}{\log \log n}\right)\right) \\
        &\leq \exp\left( \delta_{s, \varepsilon} \frac{\log n}{\log \log n}\right) \text{ for some constant $\delta_{s,\varepsilon}$ and large enough }n \\
        &\leq \exp\left( \frac{\varepsilon}{2} \log n \right)\text{ for large enough }n.
    \end{align*} Therefore, we have $d_s(n)  = d_{s,1}(n) d_{s,2}(n) \leq n^\varepsilon$ for large enough $n$. Consequently, accounting for the finitely many smaller values of $n$, there exists a constant $\eta_{s, \varepsilon} > 0$ such that $d_s(n) \leq \eta_{s, \varepsilon} n^\varepsilon$ for any $n \geq 1.$
\end{proof}

Thus, we have shown that for any finite Galois extension $K/\Q$ and $\varepsilon > 0,$ there exists a constant $\eta_{K, \varepsilon} := \eta_{s,\varepsilon}$ such that $a_K(n) \leq \eta_{K, \varepsilon} n^\varepsilon$ for any positive integer $n$. We use this bound to show that the function $h_{K, \varepsilon}(r)$ we define next is a lower bound for $h_K(r)$. Then we apply Theorem \ref{partition} and Chebotarev's density theorem  to prove Theorem \ref{lowerbound2}. 

\begin{definition}
    Let $K/\Q$ be a finite Galois extension, and let $\varepsilon > 0.$ For $r > \varepsilon + 1,$ define $$h_{K, \varepsilon}(r) = \left\lfloor \left( \frac{r-\varepsilon-1}{\eta_{K,\varepsilon}} \right)^{\frac{1}{1+\varepsilon}} \right\rfloor.$$
\end{definition}

\begin{proof}[Proof of Theorem \ref{lowerbound2}]
    We show $h_{K, \varepsilon}(r) \leq h_K(r).$ Let $2 \leq d \leq h_{K, \varepsilon}(r).$ Because $a_K(n) \leq \eta_{K, \varepsilon}n^\varepsilon$ for any positive integer $n$, we have \begin{align*}
        \sum_{n > d} \frac{a_K(n)}{n^r} &\leq \eta_{K, \varepsilon} \sum_{n > d} n^{-r + \varepsilon} \\
        &< \eta_{K, \varepsilon}\int_d^\infty x^{-r+\varepsilon} \; dx \\
        &= \frac{\eta_{K, \varepsilon}}{r-\varepsilon-1} d^{-r+\varepsilon+1},
    \end{align*} where the last equality holds because $r > \varepsilon + 1.$ The fact that $d \leq h_{K, \varepsilon}(r)$ implies $$\frac{\eta_{K, \varepsilon}}{r-\varepsilon-1} d^{-r + \varepsilon + 1} \leq d^{-r}.$$ The computation above thus gives $$\sum_{n > d} \frac{a_K(n)}{n^r} < d^{-r},$$ verifying $h_{K, \varepsilon}(r) \leq h_K(r).$

    Recall that by Theorem \ref{partition}, we have $$C_{r,K} \geq \prod_{n=2}^{h_{K,\varepsilon}(r)} (b_K(n) + 1).$$ We then restrict this product to the rational primes that split completely in $K$ in order to apply Chebotarev's density theorem. In particular, we have \begin{align*}
        C_{r,K} &\geq \prod_{p \leq h_{K, \varepsilon}(r) \text{ splits completely in }K} (b_K(p) + 1) \\
        &= (s+1)^{\#\{ p \leq h_{K, \varepsilon}(r) : p \text{ splits completely in }K\}}.
    \end{align*} Notice  that $h_{K, \varepsilon}(r) \asymp_{K,\varepsilon} r^{1/ (1 + \varepsilon)}$; that is, $h_{K,\varepsilon}(r)$ is bounded above and below by $(K,\varepsilon)$-dependent constant multiples of $r^{1/(1+\varepsilon)}$ for $r$ sufficiently large.
    Chebotarev's density theorem then gives us 
    \begin{align*}
        \#\{ p \leq h_{K, \varepsilon}(r) : p \text{ splits completely in }K\} &\sim \frac{1}{s} \frac{h_{K,\varepsilon}(r)}{\log h_{K, \varepsilon}(r)} \\
        &\asymp_{K,\varepsilon} \frac{r^{1/(1+ \varepsilon)}}{\log r}.
    \end{align*} Because $s$ is dependent on $K$, we conclude that there exist constants $\mu_{K,\varepsilon}$ and $r_{K,\varepsilon}$ for which $$\log C_{r,K} > \mu_{K,\varepsilon} \left( \frac{r^{1/(1+ \varepsilon)}}{\log r} \right)$$ holds for any $r > r_{K,\varepsilon}.$
\end{proof}

\section{Unboundedness of $C_{r,K}$ when $r$ and $[K:\Q]$ are fixed}\label{section-last}

The previous section showed that for a fixed number field $K$, the number of connected components $C_{r,K}$ tends to $\infty$ as $r$ tends to $\infty.$ A natural next question is whether $C_{r,K}$ exhibits the same behavior when $r$ is fixed and $K$ is varied. In this section, we prove Theorem \ref{fix-s}, which answers this question in the positive. Indeed, we show the stronger result that $C_{r,K}$ can be arbitrarily large when $r$ and $[K:\Q]$ are fixed and $K$ is varied.
To prove Theorem \ref{fix-s}, we define a notion of \textit{mighty norms} that generalizes terminology defined in \cite{Z18}, after which we outline the main ideas of the proof.  

\begin{definition}
    Let $d > 1$ be the norm of some prime ideal in $K$. We say $d$ is an \textit{$r$-mighty norm} of $K$ if \begin{equation} \label{mighty}
        1 + d^{-r} > \prod_{N(\mathfrak p) > d} (1 - N(\mathfrak p)^{-r})^{-1}.
    \end{equation}
\end{definition}

Like Zubrilina's definition, the existence of $r$-mighty norms correspond to the existence of gaps in $\sigma_{r,K}(I_K),$ which we make precise below. 

\begin{lemma}\label{lemma-gaps}
    Let $d_1 < d_2 < \dots < d_M$ be $r$-mighty norms of $K$. Then $C_{r, K} \geq M+1.$
\end{lemma}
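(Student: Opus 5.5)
Each $r$-mighty norm $d_i$ produces a gap in $\overline{\sigma_{r,K}(I_K)}$: a real number not attained in the closure, with attained values on both sides. $M$ disjoint gaps then force at least $M+1$ components. Here $\chi=\chi_0$, so every value is real and $\geq 1$.

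For each $i$, split $I_K$ into two classes. Let $\mathcal{A}_i$ be the ideals divisible by some prime ideal of norm at most $d_i$, and $\mathcal{B}_i$ the ideals all of whose prime factors have norm greater than $d_i$. I will show
\[
\sup \sigma_{r,K}(\mathcal{B}_i) \leq \prod_{N(\mathfrak p) > d_i} (1 - N(\mathfrak p)^{-r})^{-1} =: U_i < 1 + d_i^{-r} \leq \inf \sigma_{r,K}(\mathcal{A}_i).
\]
The first bound holds because $\sigma_{r,K}(I) = \prod_{\mathfrak p^e \| I} \sigma_{r,K}(\mathfrak p^e)$ and each factor is at most $(1-N(\mathfrak p)^{-r})^{-1}$. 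The middle inequality is exactly \eqref{mighty}. For the last, any $I \in \mathcal{A}_i$ has a prime divisor $\mathfrak p$ with $N(\mathfrak p) \leq d_i$, so $\sigma_{r,K}(I) \geq 1 + N(\mathfrak p)^{-r} \geq 1 + d_i^{-r}$. Hence the open interval $G_i = (U_i, 1 + d_i^{-r})$ misses the closure. Both sides are nonempty: $\mathcal{O}_K \in \mathcal{B}_i$ gives the value $1 \leq U_i$, and a prime of norm $d_i$ lies in $\mathcal{A}_i$. So the closure has points below and above $G_i$.

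Next I order the gaps. The lower endpoints $1 + d_i^{-r}$ strictly decrease in $i$. The upper bounds $U_i$ are non-increasing, since the product for larger $d$ runs over a subset of primes. Hence $G_i$ lies to the right of $G_{i+1}$ in the sense I need: pick points $g_i \in G_i$; I claim $g_1 > g_2 > \cdots > g_M$ and that some point of the closure lies strictly between consecutive ones. The natural witness between $G_{i+1}$ and $G_i$ is $\sigma_{r,K}(\mathfrak p) = 1 + d_{i+1}^{-r}$ with $N(\mathfrak p) = d_{i+1}$. It lies at or above the right end of $G_{i+1}$, and since $d_{i+1} > d_i$ and the prime is in $\mathcal{B}_i$, it is at most $U_i$, i.e.\ at or below the left end of $G_i$.

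Thus the closure meets each of the $M+1$ regions $(-\infty, g_M)$, $(g_{i+1}, g_i)$ for $1 \leq i < M$, and $(g_1, \infty)$, while avoiding every $g_i$. A connected subset of $\mathbb{R}$ cannot straddle a point it omits, so there are at least $M+1$ components. The only real care point is making the gaps nested correctly; the witness $1 + d_{i+1}^{-r}$ handles this, using that it lies in $\mathcal{B}_i$.
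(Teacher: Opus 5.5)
Your proof is correct and follows the paper's argument. You use the same split into ideals with and without a prime factor of norm at most $d_i$ to get the gap $(U_i, 1+d_i^{-r})$, and you separate consecutive gaps by the same fact the paper uses: a prime of norm $d_{i+1}$ lies in the ``norm $> d_i$'' class. Your explicit witness $\sigma_{r,K}(\mathfrak p)=1+d_{i+1}^{-r}$ between consecutive gaps makes the final count of $M+1$ components slightly more careful than the paper's bare check that the gap intervals are disjoint.
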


\begin{proof}
    Fix $d := d_i$ for some $1 \leq i \leq M.$ We first show the existence of a gap in $\overline{\sigma_{r,K}(I_K)}$ associated to $d$. Let $S$ be the set of ideals $I$ in $K$ such that some prime ideal factor $\mathfrak q$ of $I$ satisfies $N(\mathfrak q) \leq d.$ For any $I \in S,$ we have $\sigma_{r,K}(I) \geq 1+d^{-r}.$ Meanwhile, for any $I \in I_K \setminus S,$ we have $\sigma_{r,K}(I) \leq \prod_{N(\mathfrak p) > d} (1 - N(\mathfrak p)^{-r})^{-1}.$ Therefore, $\overline{\sigma_{r,K}(S)} \cap \overline{\sigma_{r,K}(I_K \setminus S)} = \emptyset.$ 

    Now, we verify that for any $1 \leq i < j \leq M,$ the gap intervals $$\left( \prod_{N(\mathfrak p) > d_j} (1 - N(\mathfrak p)^{-r})^{-1}, 1 + d_j^{-r} \right) \text{\; and \;} \left( \prod_{N(\mathfrak p) > d_i} (1 - N(\mathfrak p)^{-r})^{-1}, 1 + d_i^{-r} \right)$$ are disjoint. We see easily that $$\prod_{N(\mathfrak p) > d_i} (1 - N(\mathfrak p)^{-r})^{-1} \geq (1 - d_j^{-r})^{-1} > 1 + d_j^{-r},$$ so the intervals are disjoint. Thus, there are at least $M$ many gap intervals in $\overline{\sigma_{r,K}(I_K)},$ so $C_{r,K} \geq M+1.$
\end{proof}

For an arbitrary positive integer $M$, we aim to construct a sequence of rational primes $p_1 < p_2 < \cdots < p_M$ and a degree-$s$ number field $K$ in which $p_1,\ldots,p_M$ are $r$-mighty norms. By Lemma \ref{lemma-gaps}, this would immediately give Theorem \ref{fix-s}.
The idea is to exploit the effect of a prime ideal's splitting behavior on its norm. Notice that rational primes $p$ that split completely in a number field $K/\Q$ have ``smaller" norms $p$, whereas rational primes that are inert in $K$ have ``larger" norms $p^s$. Hence, if the primes $p_1 < p_2 < \dots < p_M$ are spaced widely enough apart and if they split completely while nearby primes remain inert in $K$, then $p_1,p_2,\ldots,p_M$ are $r$-mighty norms of $K$. The first lemma we introduce achieves the first goal of constructing such a sequence of rational primes, and the second lemma we introduce achieves the second goal of showing the existence of a number field $K$ with the desired splitting behaviors. 

\begin{lemma} \label{technical}
    Fix a real number $r > 1,$ an integer $s \geq 2,$ and an integer $M \geq 1.$ There exists a sequence of rational primes $p_1 < p_2 < \dots < p_M$ satisfying the following conditions. 
    \begin{enumerate}
        \item \begin{enumerate}
            \item $p_1 > \max(2,s)^s.$
            \item $\log(1-x)^{-1} \leq 2x$ for $x = p_1^{-r}.$
            \item $\log(1+x) \geq \frac{x}{2}$ for $x = p_1^{-r}.$
        \end{enumerate} \noindent (Note that (b) and (c) thus also hold for any $0 \leq x \leq p_1^{-r}.$)
        \item For each $2 \leq i \leq M,$ we have $p_i^{1/s} > p_{i-1}.$ In particular, defining for each $1 \leq i \leq M$ the set $S_i = \{q \text{ prime} : p_i^{1/s} < q \leq p_i\},$ the sets $S_1,\ldots,S_M$ are pairwise disjoint.
        \item For any $1 \leq j < i \leq M,$ we have $$s \cdot \sum_{q \in S_i} q^{-r} < \frac{1}{12M} p_j^{-r}.$$
        \item For any $1 \leq i \leq M,$ we have $$\sum_{\text{prime }q > p_i} q^{-sr} < \frac{1}{12}p_i^{-r}.$$
    \end{enumerate}
\end{lemma}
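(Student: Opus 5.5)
We will construct the primes greedily, choosing $p_1$ first and then each $p_i$ given $p_1,\dots,p_{i-1}$. The key point is that every condition involving $p_i$ is either a lower bound on $p_i$ that holds once $p_i$ is large enough, or a condition that involves only earlier primes. Conditions (3) and (4) are the only nontrivial ones, and both reduce to tail estimates for sums of $q^{-r}$ over primes $q$ larger than a growing threshold.

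\emph{Choosing $p_1$.} Condition (1)(a) is just a lower bound. For (1)(b) and (1)(c), note that $\log(1-x)^{-1} = x + O(x^2)$ and $\log(1+x) = x + O(x^2)$ as $x \to 0^+$. Hence there is $x_0>0$ with $\log(1-x)^{-1}\le 2x$ and $\log(1+x)\ge x/2$ for all $0\le x\le x_0$. So any sufficiently large prime $p_1$ satisfies (1). Condition (4) for $i=1$ also holds for $p_1$ large, by the same argument we give below for general $i$.

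\emph{Inductive step.} Suppose $p_1<\dots<p_{i-1}$ have been chosen. We take $p_i$ to be a prime larger than $p_{i-1}^s$, so that (2) holds. For (3), bound the sum over all primes in $S_i$:
\[
s\sum_{q\in S_i} q^{-r}\le s\sum_{n> p_i^{1/s}} n^{-r}\le s\Bigl(p_i^{-r/s}+\frac{p_i^{(1-r)/s}}{r-1}\Bigr).
\]
Since $r>1$, the right-hand side tends to $0$ as $p_i\to\infty$. The finitely many targets $\frac{1}{12M}p_j^{-r}$ for $j<i$ are fixed positive numbers, so (3) holds once $p_i$ is large. For (4), we need
\[
\sum_{q>p_i} q^{-sr}\le \sum_{n>p_i} n^{-sr}\le p_i^{-sr}+\frac{p_i^{1-sr}}{sr-1}.
\]
We want this to be less than $\frac{1}{12}p_i^{-r}$. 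Dividing through by $p_i^{-r}$ leaves $p_i^{-(s-1)r}+\frac{p_i^{1-(s-1)r}}{sr-1}$, which tends to $0$ as $p_i \to \infty$ provided $(s-1)r>1$. Since $s\ge2$ and $r>1$, we have $(s-1)r\ge r>1$, so this holds. Each required condition is thus a threshold $p_i>T_i$ depending only on $r,s,M$ and $p_1,\dots,p_{i-1}$. By the infinitude of primes, such a $p_i$ exists.

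\emph{Main obstacle.} The only delicate point is the exponent check in (4). The comparison is between the sum of $q^{-sr}$ and $p_i^{-r}$ itself, not against a fixed constant. It is therefore essential that $sr-1>r$, which relies on $s\ge2$. Everything else follows from tails of convergent series going to zero.
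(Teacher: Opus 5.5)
Your proof is correct and takes essentially the same approach as the paper: the primes are chosen recursively, and each condition reduces to a tail bound that tends to zero. The key exponent check for (4) is $sr-1>r$, using $s\ge 2$. You are also slightly more careful than the paper: you explicitly verify (4) for $i=1$, and you include the extra $y^{-r}$ term when comparing the sum with the integral.
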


Before we give a technical proof of this lemma, we briefly explain the later necessity of each condition. Recall that our goal is to show $p_1, p_2, \ldots, p_M$ are $r$-mighty norms for some degree-$s$ number field $K$.
Condition (1) allows us to compare the right-hand side of Equation (\ref{mighty}) for $d = p_i$ with simpler expressions in $p_i,$ such as $\sum_{N(\mathfrak p) > p_i} N(\mathfrak p)^{-r}.$ Then conditions (2), (3), and (4) allow us to give an upper bound on $\sum_{N(\mathfrak p) > p_i} N(\mathfrak p)^{-r}$ that we then compare to the left-hand side of Equation (\ref{mighty}) for $d = p_i.$ 

\begin{proof}[Proof of Lemma \ref{technical}]
    We construct the desired sequence of primes recursively. We first construct the desired $p_1.$ Condition (1) can be imposed by taking $p_1$ sufficiently large, because $$\lim_{x \ra 0} \frac{\log(1-x)^{-1}}{x} = \lim_{x \ra 0} \frac{\log(1+x)}{x} = 1.$$
    
    Now, suppose we have chosen $p_1, \ldots, p_{i-1}$ to satisfy the above conditions; we aim to choose $p_i$ such that $p_1,\ldots,p_{i}$ satisfy the four conditions. Condition (2) clearly just requires $p_i$ to be sufficiently large. Next, we consider condition (3). Because $r > 1,$ we may write $$\sum_{q \in S_i} q^{-r} \leq \sum_{q  > p_i^{1/s}} q^{-r} < \int_{p_i^{1/s}}^\infty x^{-r} \;dx = \frac{(p_i^{1/s})^{-r+1}}{r-1} \ll_r p_i^{\frac{1-r}{s}}.$$ Therefore, we have $$s \cdot \sum_{q \in S_i} q^{-r}  \ll_{r, s} p_i^{\frac{1-r}{s}}.$$ The right-hand side tends to 0 as $p_i$ tends to $\infty,$ so we can choose $p_i$ sufficiently large to bound it from above by $\frac{1}{12M} \min_{1 \leq j < i} p_j^{-r}.$ Lastly, we consider condition (4). Similarly, because $s \geq 2$ and $r > 1$ and thus $sr-1 > 1,$ we see that $$\sum_{\text{prime }q > p_i} q^{-sr} \ll_r p_i^{-sr+1}.$$ Because $sr-1 > r,$ we can choose $p_i$ sufficiently large to bound the left-hand side from above by $\frac{1}{12}p_i^{-r}.$
\end{proof}

\begin{lemma} \label{ntlemma}
    Let $s \geq 2$ be an integer, and let $S$ and $T$ be disjoint finite sets of rational primes. Furthermore, assume each prime in $S$ is greater than $s$. Then there exists a degree-$s$ extension $K/\Q$ such that each prime in $S$ splits completely in $K$ and each prime in $T$ remains inert in $K$. 
\end{lemma}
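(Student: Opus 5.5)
We will realize $K$ as $\Q(\alpha)$ with $\alpha$ a root of a monic integer polynomial $f$ of degree $s$, chosen by local conditions at the primes of $S \cup T$ and glued together with the Chinese remainder theorem.

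For each $p \in S$, take $f_p \in \Z[x]$ monic of degree $s$ whose reduction mod $p$ is $\prod_{k=1}^{s}(x-k)$. Since $p > s$, the residues $1,\dots,s$ are distinct mod $p$, so $\bar f_p$ is separable and splits into distinct linear factors over $\mathbb F_p$. For each $q \in T$, take $f_q$ monic of degree $s$ whose reduction mod $q$ is irreducible over $\mathbb F_q$; such a polynomial exists because $\mathbb F_{q^s}$ has a primitive element. By the Chinese remainder theorem applied coefficientwise, choose a monic $f \in \Z[x]$ of degree $s$ with $f \equiv f_p \pmod p$ for all $p \in S$ and $f \equiv f_q \pmod q$ for all $q \in T$.

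First, $f$ is irreducible over $\Q$. If $T \neq \emptyset$, this follows from Gauss's lemma, since $f$ is irreducible mod some $q \in T$. If $T = \emptyset$, irreducibility has to be forced separately. I would add an auxiliary prime $\ell \notin S$ and impose $f \equiv x^s - \ell \pmod{\ell^2}$, which makes $f$ Eisenstein at $\ell$; this condition is compatible with the CRT system. Alternatively, one can add an auxiliary prime to $T$ and impose irreducibility mod it. Either way, $f$ is irreducible, and $K = \Q(\alpha)$ has degree $s$.

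Next, read off the splitting. For a prime $p$ with $\bar f$ separable mod $p$, the discriminant of $f$ is a $p$-adic unit. Since $\operatorname{disc}(f) = [\mathcal O_K : \Z[\alpha]]^2 \operatorname{disc}(K)$, it follows that $p \nmid [\mathcal O_K:\Z[\alpha]]$. Dedekind's factorization criterion (Kummer--Dedekind) then says the factorization of $p\mathcal O_K$ mirrors the factorization of $\bar f$ mod $p$. Each $q \in T$ has $\bar f$ irreducible of degree $s$, so $q$ is inert. Each $p \in S$ has $\bar f$ a product of $s$ distinct linear factors, so $p$ splits completely.

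The main obstacle is this index and separability issue, that is, making sure Dedekind's criterion applies at every prime of $S \cup T$. It is handled by choosing the local reductions to be separable; this is exactly where the hypothesis that each $p \in S$ exceeds $s$ enters. The secondary issue is irreducibility when $T$ is empty, dealt with by the auxiliary prime above. Note that $K$ need not be Galois, which matches the theorem's requirement of only a degree-$s$ extension.
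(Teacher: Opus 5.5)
Your proposal is correct and follows essentially the same route as the paper: coefficientwise CRT gluing of separable local reductions, the identity $\operatorname{disc}(f) = [\mathcal O_K:\Z[\alpha]]^2\operatorname{disc}(K)$ to get $p \nmid [\mathcal O_K:\Z[\alpha]]$ for $p \in S\cup T$, and then Dedekind--Kummer. The only difference is cosmetic: the paper always adds an auxiliary prime $q \notin S\cup T$ with $f \equiv x^s+q \pmod{q^2}$ to force irreducibility, whereas you use irreducibility modulo a prime of $T$ and resort to the Eisenstein trick only when $T=\emptyset$.
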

\begin{proof}
    We aim to construct a monic irreducible polynomial  $f(x) \in \Z[x]$ such that, modulo each prime in $S \cup T,$ its factorization type corresponds to the desired splitting behavior. We then can apply the Dedekind--Kummer theorem to show that $\Q(\alpha)$, where $\alpha$ is a root of $f(x),$ is a desired choice of $K$.

    For each $p \in S,$ we choose a monic degree-$s$ polynomial $f_p(x) = x^s + c_{p,s-1}x^{s-1} + \cdots + c_{p,0} \in \mathbb F_p[x]$ that splits into $s$ distinct linear factors over $\mathbb F_p.$ This is feasible because $s < p.$ Furthermore, for each $p \in T,$ we choose a monic irreducible polynomial $f_p(x)  = x^s + c_{p,s-1} x^{s-1} + \cdots + c_{p,0} \in \mathbb F_p[x]$ of degree $s$. Finally, fix some prime $q \not\in S \cup T,$ and let $f_q(x) = x^s + q.$ We show there exists a monic degree-$s$ polynomial $f(x) \in \Z[x]$ such that 
    $$f(x) \equiv \begin{cases}
        f_p(x) \pmod{p}& \text{ for } p \in S \cup T \\
        f_q(x) \pmod{q^2}
    \end{cases}$$
    For each $0 \leq i \leq s-1,$ we apply the Chinese remainder theorem to obtain $n_i \in \Z$ such that $n_i \equiv c_{p,i} \pmod{p}$ for each $p \in S \cup T$ and $n_i \equiv q \pmod{q^2}$ if $i = 0$ and $n_i \equiv 0 \pmod{q^2}$ else. Then $f(x) = x^s + n_{s-1}x^{s-1} + \cdots + n_0 \in \Z[x]$ satisfies the desired congruences.  
    
    Note that $f$ is an Eisenstein polynomial because $f(x) \equiv x^s + q \pmod{q^2},$ so it is irreducible. Let $\alpha$ be a root of $f,$ and let $K  = \Q(\alpha).$ To apply the Dedekind--Kummer theorem to $K$ for each $p \in S \cup T$, we must check that $p \nmid [\mathcal O_K : \Z[\alpha]].$ Fix $p \in S \cup T$. We have the identity $$\text{disc}(f) = [\mathcal O_K : \Z[\alpha]]^2 \cdot \text{disc}(K).$$ Now, notice that $f(x) \pmod{p}$ is squarefree; when $p \in S,$ this is by construction, and when $p \in T,$ we note that any irreducible polynomial over a finite field is separable and thus squarefree. Therefore, $p \nmid \text{disc}(f)$. Using the fact that $\text{disc}(K) \in \Z$ because $\alpha$ is an algebraic integer, we obtain $p \nmid [\mathcal O_K : \Z[\alpha]].$ Thus, the Dedekind--Kummer theorem applies at each $p \in S \cup T.$ If $p \in S,$ then $f(x) \equiv f_p(x) \pmod{p}$ splits into $s$ distinct linear factors, so $p$ splits completely in $K$; if $p \in T,$ then $f(x) \equiv f_p(x) \pmod{p}$ is irreducible, so $p$ remains inert in $K$. 
\end{proof}

We are ready to prove Theorem \ref{fix-s}.

\begin{proof}[Proof of Theorem \ref{fix-s}]
    Let $M$ be an arbitrary positive integer. By Lemma \ref{technical}, we have a sequence of rational primes $p_1 < p_2 < \dots < p_M$ satisfying the four conditions specified in the lemma's statement. Now, note that we can choose a real number $X > p_M$ such that $$s \cdot \sum_{q > X} q^{-r} < \frac{1}{12} p_M^{-r}.$$ The left-hand side is bounded above by some $r$-dependent constant multiple of $X^{-r+1}$ and thus tends to 0 as $X$ tends to $\infty,$ so making $X$ large enough ensures the validity of the above inequality. 
    Now, let $$S = \bigcup_{i=1}^M S_i,$$ where $S_i$ are defined in the statement of Lemma \ref{technical}, and let $T$ be the set of rational primes $q \not\in S$ such that $q \leq X.$ Applying Lemma \ref{ntlemma} to these choices of $S$ and $T$, we have a degree-$s$ extension $K/\Q$ such that $p_1, \ldots, p_M$ split completely in $K$ and any rational prime $q \not\in S$ for which $q \leq X$ is inert.  

    Fix $1 \leq i \leq M.$ Because $p_i$ splits completely in $K$, it is a norm of a prime ideal in $K$. We prove that $p_i$ is an $r$-mighty norm of $K$; this would imply $C_{r,K} \geq M+1,$ proving the theorem. Recall that this requires showing $$\prod_{N(\mathfrak p) > p_i} (1 - N(\mathfrak p)^{-r})^{-1} < 1 + p_i^{-r}.$$ By condition (1) in Lemma \ref{technical}, we have $\log(1-x)^{-1} \leq 2x$ for any $x \leq p_1^{-r}.$ Therefore, it suffices to show \begin{equation} \label{eq-suffices}
        2 \sum_{N(\mathfrak p) > p_i} N(\mathfrak p)^{-r} < \log(1 + p_i^{-r}).
    \end{equation} We proceed to bound the sum on the left-hand side by considering the three following types of rational primes $q$ that $\mathfrak p$ can lie over. 
    \begin{itemize}
        \item $q \leq X$ is a rational prime that splits completely in $K$. The norm of any prime ideal lying above $q$ is $q$, so by condition (2) of Lemma \ref{technical}, if $q > p_i,$ then $q \in S_j$ for some $j > i$. Therefore, the total contribution of prime ideals $\mathfrak p$ lying above such $q$ satisfying $N(\mathfrak p) > p_i$ to the summation is bounded from above by $$s \cdot \sum_{j > i} \sum_{q \in S_j} q^{-r} < \sum_{j > i} \frac{1}{12M} p_i^{-r} < \frac{1}{12}p_i^{-r},$$ where the first inequality is condition (3) of Lemma \ref{technical}.
        \item $q \leq X$ is a rational prime that remains inert in $K$. If $q \leq p_i^{1/s},$ then $N(q\mathcal O_K) = q^s \leq p_i,$ so $q \mathcal O_K$ does not contribute to the summation. Because $q \leq X$ is inert in $K$, it does not lie in $S_i,$ so $q \not\in (p_i^{1/s}, p_i].$ Lastly, if $q > p_i$, then the sole prime ideal $q \mathcal O_K$ lying above $q$ contributes $q^{-sr}$ to the summation. The total contribution of these inert $q \leq X$ is $$\sum_{\text{prime }q > p_i} q^{-sr} < \frac{1}{12} p_i^{-r},$$ where the inequality is condition (4) of Lemma \ref{technical}.
        \item $q > X$ is a rational prime. Then $q > p_i,$ so any prime ideal $\mathfrak p$ lying above $q$ satisfies $N(\mathfrak p) > p_i.$ There are at most $s$ prime ideals in $K$ lying above $q$, and each one has norm at least $q$. Thus, the contribution of such ideals to the summation is bounded above by $$s \cdot \sum_{q > X} q^{-r} < \frac{1}{12} p_M^{-r} \leq \frac{1}{12} p_i^{-r},$$ where the middle inequality follows from the definition of $X$.  
    \end{itemize}
    Summing the contributions from these three cases, we conclude that $$\sum_{N(\mathfrak p) > p_i} N(\mathfrak p)^{-r} < \frac{1}{4} p_i^{-r}.$$ Then Equation (\ref{eq-suffices}) holds if $$2 \left(\frac{1}{4} p_i^{-r}\right) = \frac{1}{2} p_i^{-r} < \log(1 + p_i^{-r}).$$ This holds true by condition (1) in Lemma \ref{technical}, so we have shown $p_i$ is an $r$-mighty norm of $K$. 
\end{proof}

\section{Acknowledgments}

This research was conducted at the Duluth REU at the University of Minnesota Duluth, which is supported by National Science Foundation grant No.~DMS-2409861, Jane
Street Capital, and donations from Ray Sidney and Eric Wepsic. I would like
to thank Colin Defant, Noah Kravitz, Mitchell Lee, Maya Sankar, and Carl Schildkraut for providing guidance during the research process, as well as Derek Liu for helpful discussions. I would also like to thank Jonas Iskander, Noah Kravitz, Andrew Kwon, Mitchell Lee, and Yelena Mandelshtam for their detailed feedback throughout the editing process. Finally, I thank Colin
Defant and Joe Gallian for providing this wonderful opportunity to participate in the Duluth REU.

\end{document}